\newtheorem{theorem}{Theorem}[section]
\newtheorem{corollary}{Corollary}[section]
\newtheorem{lemma}{Lemma}[section]
\newtheorem{assumption}{Assumption}[section]
\newtheorem{remark}{Remark}[section]
\newenvironment{proof}{{\noindent\it Proof.}\quad}{\hfill $\square$\\}
\newcommand{\U}{\mathcal{U}}
\newcommand{\Lt}{L^2}
\newcommand{\Hs}{H^s}
\newcommand{\Sd}{\mathbb{S}^d}
\newcommand{\Pn}{\mathbb{P}_n}
\begin{document}
\title{Bypassing the quadrature exactness assumption of hyperinterpolation on the sphere}

\author{Congpei An\footnotemark[1]
       \quad\quad Hao-Ning Wu\footnotemark[2]}

\renewcommand{\thefootnote}{\fnsymbol{footnote}}
\footnotetext[1]{School of Mathematics, Southwestern University of Finance and Economics, Chengdu, China (ancp@swufe.edu.cn).}
\footnotetext[2]{Department of Mathematics, The University of Hong Kong, Hong Kong, China (hnwu@connect.hku.hk).}


\maketitle

\begin{abstract}
This paper focuses on the approximation of continuous functions on the unit sphere by spherical polynomials of degree $n$ via hyperinterpolation. Hyperinterpolation of degree $n$ is a discrete approximation of the $L^2$-orthogonal projection of degree $n$ with its Fourier coefficients evaluated by a positive-weight quadrature rule that exactly integrates all spherical polynomials of degree at most $2n$.
This paper aims to bypass this quadrature exactness assumption by replacing it with the Marcinkiewicz--Zygmund property proposed in a previous paper. Consequently, hyperinterpolation can be constructed by a positive-weight quadrature rule (not necessarily with quadrature exactness). This scheme is referred to as \emph{unfettered hyperinterpolation}. This paper provides a reasonable error estimate for unfettered hyperinterpolation. The error estimate generally consists of
two terms: a term representing the error estimate of the original hyperinterpolation
of full quadrature exactness and another introduced as compensation for the loss of exactness
degrees. A guide to controlling the newly introduced term in practice is provided. In particular, if the quadrature points form a quasi-Monte Carlo (QMC) design, then there is a refined error estimate. Numerical experiments verify the error estimates and the practical guide.

\end{abstract}

\textbf{Keywords: }{hyperinterpolation, quadrature, exactness, Marcinkiewicz--Zygmund inequality, spherical $t$-designs, QMC designs}

\textbf{AMS subject classifications.} 65D32, 41A10, 41A55, 42C10, 33C55

\section{Introduction}

Let $\Sd:=\{x\in\mathbb{R}^{d+1}:\|x\|_2=1\}$ be the unit sphere in the Euclidean space $\mathbb{R}^{d+1}$ for $d\geq 2$, endowed with the surface measure $\omega_d$; that is, $\lvert\mathbb{S}^d\rvert:=\int_{\Sd}\text{d}\omega_d$ denotes the surface area of the unit sphere $\Sd$. Many real-world applications can be modeled as spherical problems. A critical task of spherical modeling is to find an effective data fitting strategy to approximate the underlying mapping between input and output data. Hyperinterpolation, introduced by Sloan in \cite{sloan1995polynomial}, is a simple yet powerful method for fitting spherical data, and it has received a great deal of interest since its birth, see, e.g., \cite{an2021lasso,MR2274179,le2001uniform,MR4226998,MR1761902,reimer2002generalized,zbMATH01421286,sloan2012filtered,MR1845243}. Given sampled data $\{(x_j,y_j)\}_{j=1}^m\subset \Sd\times \mathbb{R}$, the underlying mapping can be modeled as a spherical hyperinterpolant of degree $n$ in the form of 
\begin{equation}\label{equ:map}
x\in\Sd\mapsto \sum_{j=1}^mw_jy_jG_n(x,x_j)\in \mathbb{R},
\end{equation}
where $w_j>0$, $j=1,2,\ldots,m$, are some prescribed weights, 
$$G_n(x,y) = \sum_{\ell=0}^n\sum_{k=1}^{Z(d,\ell)}Y_{\ell,k}(x)Y_{\ell,k}(y)$$ 
is a kernel generated by the spherical harmonics $\{Y_{\ell,k}\}$ of degree $\ell$ at most $n$, and the the precise number $Z(d,\ell)$ of spherical harmonics of exact degree $\ell$ is given in \eqref{equ:numberZ}. 

The simplicity of spherical hyperinterpolation is manifested in the modeled mapping \eqref{equ:map}. Unlike many other fitting techniques that usually need to solve a system of linear equations to obtain the modeled mapping, e.g., the least squares, the spherical hyperinterpolation \eqref{equ:map} can be  directly written down and immediately generates the output from any input $x\in\Sd$ without any mathematical manipulations but only addition and multiplication. Moreover, adding a new data pair or withdrawing an existing one can be directly achieved without a new computation from scratch.

However, the construction of hyperinterpolation of degree $n$ requires a positive-weight quadrature rule
\begin{equation}\label{equ:quad}
\sum_{j=1}^mw_jf(x_j)\approx \int_{\Sd}f\text{d}\omega_d
\end{equation}
to be exact for polynomials up to degree $2n$, that is,
\begin{equation}\label{equ:quadexactness}
\sum_{j=1}^mw_jf(x_j)= \int_{\Sd}f\text{d}\omega_d\quad \forall f\in\mathbb{P}_{2n}(\Sd),
\end{equation}
where $\Pn(\Sd)$ be the space of spherical polynomials of degree at most $n$. A convenient $\Lt$-orthonormal basis (with respect to $\omega_d$) for $\Pn$ is provided by the spherical harmonics $\{Y_{\ell,k}:k=1,2,\ldots Z(d,\ell);\ell=0,1,2,\ldots,n\}$. The hyperinterpolation operator $\mathcal{L}_n:\mathcal{C}(\Sd)\rightarrow\Pn(\Sd)$ maps a continuous function $f\in \mathcal{C}(\Sd)$ to 
\begin{equation}\label{equ:hyper}
\mathcal{L}_nf := \sum_{\ell=0}^n\sum_{k=1}^{Z(d,\ell)}\left\langle f,Y_{\ell,k}\right\rangle_mY_{\ell,k}\in\Pn(\Sd),
\end{equation}
where $\left\langle f,g\right\rangle_m:=\sum_{j=1}^mw_jf(x_j)g(x_j)$ is the numerical evaluation of the inner product $\left\langle f,g\right\rangle:=\int_{\Sd}f(x)g(x)\text{d}\omega_d$ by the quadrature rule \eqref{equ:quad} with the exactness assumption \eqref{equ:quadexactness}. 
In other words, the hyperinterpolation \eqref{equ:hyper} of $f\in \mathcal{C}(\Sd)$ can be regarded as a discrete version of the famous $\Lt$-orthogonal projection 
\begin{equation}\label{equ:proj}
\mathcal{P}_nf := \sum_{\ell=0}^n\sum_{k=1}^{Z(d,\ell)}\left\langle f,Y_{\ell,k}\right\rangle Y_{\ell,k}\in\Pn(\Sd)
\end{equation}
of $f$ from $\mathcal{C}(\Sd)$ onto $\Pn(\Sd)$. Sometimes we may consider equal-weight quadrature rules of the form
\begin{equation}\label{equ:equalweightquad}
\frac{1}{m}\sum_{j=1}^mf(x_j)\approx \int_{\Sd}f\text{d}\omega_d.
\end{equation}

Regarding this very restrictive nature of \eqref{equ:quadexactness} that it is impractical and sometimes impossible to obtain data on the desired quadrature points in practice, our aim in this paper is to bypass this quadrature exactness assumption by replacing it with the \emph{Marcinkiewicz--Zygmund property} (see \cite{an2022quadrature}):
\begin{assumption}
We assume that there exists an $\eta\in[0,1)$ such that
\begin{equation}\label{equ:etaassumption}
\left\lvert\sum_{j=1}^mw_j\chi(x_j)^2-\int_{\Sd}\chi^2\text{d}\omega_d\right\rvert\leq \eta \int_{\Sd}\chi^2\text{d}\omega_d\quad \forall \chi\in\mathbb{P}_{n}(\Sd).
\end{equation}
If $n'=n$, i.e., the quadrature exactness is not relaxed, then the exactness \eqref{equ:quadexactness} implies $\eta =0$. 
\end{assumption}
Then the construction of hyperinterpolation is feasible with many more quadrature rules outside the traditional candidates. Traditionally, quadrature rules using spherical $t$-designs are used to construct hyperinterpolation. As we can see in this paper, quadrature rules using scattered points, equal area points, minimal energy points, maximal determinant points, and many other kinds of points are also feasible for constructing hyperinterpolation. The Marcinkiewicz--Zygmund property \eqref{equ:etaassumption} is equivalent to 
\begin{equation*}
(1-\eta) \int_{\Sd}\chi^2\text{d}\omega_d\leq \sum_{j=1}^mw_j\chi(x_j)^2\leq (1+\eta) \int_{\Sd}\chi^2\text{d}\omega_d\quad \forall \chi\in\mathbb{P}_{n}(\Sd),
\end{equation*}
which can be regarded as the Marcinkiewicz--Zygmund inequality \cite{filbir2011marcinkiewicz,Marcinkiewicz1937,mhaskar2001spherical} applied to polynomials $\chi^2$ of degree at most $2n$ with $\chi\in\Pn(\Sd)$, and it has been utilized in our recent work \cite{an2022quadrature} that quadrature rules are assumed to have exactness degree $n+n'$ with $0<n'\leq n$ for the construction of hyperinterpolation.

To tell the difference between the original hyperinterpolation $\mathcal{L}_n$ and the hyperinterpolation relying only on the Marcinkiewicz--Zygmund property \eqref{equ:etaassumption}, we refer to the latter as the \emph{unfettered hyperinterpolation}, indicating that the application of hyperinterpolation is no longer limited by the quadrature exactness assumption, and denote it by \begin{equation}\label{equ:unfetteredhyper}
\U_nf:=\sum_{\ell=0}^n\sum_{k=1}^{Z(d,\ell)}\langle f,Y_{\ell,k}\rangle_mY_{\ell,k}\in\Pn(\Sd),
\end{equation}
where the quadrature rule \eqref{equ:quad} for evaluating $\langle f,Y_{\ell,k}\rangle_m$ is only assumed to satisfy the property \eqref{equ:etaassumption}.

We derive in this paper that 
\begin{equation}\label{equ:error2}
\|\U_nf-f\|_{\Lt}\leq\left(\sqrt{1+\eta}\left(\sum_{j=1}^mw_j\right)^{1/2}+\lvert\Sd\rvert^{1/2}\right)E_n(f)+\sqrt{\eta^2+4\eta}\|\chi^*\|_{\Lt},
\end{equation}
where $E_n(f)$ denotes the best uniform error of $f\in \mathcal{C}(\Sd)$ by a polynomial in $\mathbb{P}_n(\Sd)$, that is, $E_n(f):=\inf_{\chi\in \mathbb{P}_n(\Sd)}\|f-\chi\|_{\infty}$, and $\chi^*\in\Pn(\Sd)$ is the best approximation polynomial of $f$ in $\Pn(\Sd)$ in the sense of $\|f-\chi^*\|_{\infty}=E_n(f)$. Thus, no matter what kind of point distributions is adopted, it is sufficient for a reasonable approximation error bound to control the numerical integration error so that the constant $\eta$ in the Marcinkiewicz--Zygmund property \eqref{equ:etaassumption} is reasonably small.

The $\Lt$ error estimate \eqref{equ:error2} reduces to the classical result $\|\mathcal{L}_nf-f\|_{\Lt}\leq 2\lvert\Sd\rvert^{1/2}E_n(f)$ of hyperinterpolation derived in \cite{sloan1995polynomial} when the quadrature exactness degree is assumed to be $2n$, because such an assumption leads to $\eta=0$ and $\sum_{j=1}^mw_j = \int_{\Sd}\text{d} \omega_d=\lvert\Sd\rvert$. If the quadrature exactness degree is assumed to be $n+n'$ with $0<n'\leq n$, then the estimate \eqref{equ:error2} can be refined as 
\begin{equation*}
\|\U_nf-f\|_{\Lt} \leq \left(\sqrt{1+\eta}+1\right)\lvert\Sd\rvert^{1/2}E_{n'}(f),
\end{equation*}
and this convergence rate in terms of $E_{n'}(f)$ coincides with the result in our recent work \cite{an2022quadrature} that 
\begin{equation}\label{equ:BIT}
\|\mathcal{L}_nf-f\|_{\Lt}\leq \left(\frac{1}{\sqrt{1-\eta}}+1\right)\lvert\Sd\rvert^{1/2}E_{n'}(f)
\end{equation}
under the same assumption. A Sobolev analog to the error estimate \eqref{equ:error2}, i.e., the error measured by a Sobolev norm, is also established in this paper.

We also highlight the connection between the unfettered hyperinterpolation and QMC designs. Historically, quadrature exactness is often a starting point in designing quadrature rules. Nevertheless, this trend has recently received growing concerns regarding whether exactness is a reliable designing principle, see, e.g., \cite{trefethen2022exactness}. The concept of QMC designs, introduced by Brauchart, Saff, Sloan, and Womersley in \cite{MR3246811}, is an important quadrature-designing principle against this historical trend. QMC designs include many points distributions that are easy to obtain numerically, and quadrature rules using QMC designs provide the same asymptotic order of convergence as rules with quadrature exactness when the integrand belongs to the Sobolev space $H^s(\Sd)$ with $s>d/2$. Moreover, quadrature exactness is not a necessary assumption for QMC designs. If the quadrature points form a QMC design, then we show quadrature rules using them also satisfy the Marcinkiewicz--Zygmund property \eqref{equ:etaassumption}. Hence hyperinterpolation using QMC designs is a special case in the general framework of unfettered hyperinterpolation. However, the general error estimate \eqref{equ:error2} may not be sharp for hyperinterpolation using QMC designs, and we can refine them. Regarding the particularity of QMC designs, we may refer to the hyperinterpolation of $f\in\Hs(\Sd)$ using QMC designs, though a special case of unfettered hyperinterpolation, as the \emph{QMC hyperinterpolation}, and denote it by 
\begin{equation}\label{equ:QMChyper}
\mathcal{Q}_nf:=\sum_{\ell=0}^n\sum_{k=1}^{Z(d,\ell)}\left\langle f,Y_{\ell,k}\right\rangle_mY_{\ell,k}\in\Pn(\Sd),
\end{equation}
where the quadrature rule \eqref{equ:quad} for evaluating $\langle f,Y_{\ell,k}\rangle_m$ adopt a QMC design for $\Hs(\Sd)$ as the set of quadrature points. We show in this paper that for $f\in \Hs(\Sd)$,
\begin{equation*}
\|\mathcal{Q}_nf-f\|_{\Lt}\leq c''(s,d)\left(n^{-s}+\frac{1}{m^{s/d}}\sqrt{\frac{Z(d+1,n)}{a_n^{(s)}}}\right)\|f\|_{\Hs},
\end{equation*}
where $c''(s,d)>0$ is some constant depending only on $c$ and $s$, and $a_n^{(s)}$ is of order $(1+n)^{-2s}$.

\textbf{Organization.} The paper is organized as follows. Section \ref{sec:background} collects some technical facts regarding spherical
harmonics, our Sobolev space setting, spherical $t$-designs, and QMC designs. Section \ref{sec:unfetteredtheory} gives the approximation theory of the unfettered hyperinterpolation under the only assumption of the Marcinkiewicz--Zygmund property \eqref{equ:etaassumption}. Section \ref{sec:QMCtheory} develops the approximation theory of the QMC hyperinterpolation under the only assumption that $\{x_j\}_{j=1}^m$ is a QMC design. Section \ref{sec:numerical} contains numerical experiments that validate our theory.

\section{Background}\label{sec:background}

We are concerned with real-valued functions on the sphere $\Sd$ in the Euclidean space $\mathbb{R}^{d+1}$ for $d\geq 2$.

\subsection{Spherical harmonics and hyperinterpolation} 

Let $\Lt(\Sd)$ denote the Hilbert space of all square-integrable functions on $\Sd$ with the inner product

\begin{equation*}
\left\langle f,g\right\rangle:=\int_{\Sd}f(x)g(x)\text{d}\omega_d(x)
\end{equation*}
and the induced norm $\|f\|_{\Lt}: = \sqrt{\left\langle f,f\right\rangle}$. By $\mathcal{C}(\Sd)$ we denote the space of continuous functions on $\Sd$, endowed with the uniform norm $\|f\|_{\infty}:=\sup_{x\in\Sd}|f(x)|$.

The restriction to $\Sd$ of a homogeneous and harmonic polynomial of total degree $\ell$ defined on $\mathbb{R}^{d+1}$ is called a \emph{spherical harmonic of degree $\ell$} on $\Sd$. We denote, as usual, by $\{Y_{\ell,k}:k = 1,2,\ldots,Z(d,\ell)\}$ a collection of $\Lt$-orthonormal real-valued spherical harmonics of exact degree $\ell$, where
\begin{equation}\label{equ:numberZ}
Z(d,0) = 1, \quad Z(d,\ell) = (2\ell +d-1)\frac{\Gamma(\ell+d-1)}{\Gamma(d)\Gamma(\ell+1)}\sim \frac{2}{\Gamma(d)}\ell^{d-1}\quad\text{as }\ell\rightarrow\infty,
\end{equation}
where $\Gamma(z)$ is the gamma function and $f(x)\sim g(x)$ as $x\rightarrow c$ means $f(x)/g(x)\rightarrow 1$ as $x\rightarrow c$. The spherical harmonics of degree $\ell\in \{0,1,2,\ldots\}$ satisfy the addition theorem \cite[Theorem 2]{MR0199449}, that is,
\begin{equation*}
\sum_{k=1}^{Z(d,\ell)}Y_{\ell,k}(x)Y_{\ell,k}(y)= \frac{Z(d,\ell)}{\lvert\Sd\rvert}P_{\ell}^{(d)}(x\cdot y),
\end{equation*}
where $P_{\ell}^{(d)}$ is the normalized Gegenbauer polynomial on $[-1,1]$, orthogonal on with respect to the weight function $(1-t^2)^{d/2-1}$, and normalized such that $P^{(d)}_{\ell}(1)=1$. As an immediate application of the addition theorem, we have
\begin{equation}\label{equ:sphericalharmincsbound}
\|Y_{\ell,k}\|_{\infty}\leq \left(Z(d,\ell)/\lvert\Sd\rvert\right)^{1/2}\quad \forall \ell=0,1,2,\ldots\text{ and }k=1,2,\ldots,Z(d,\ell).
\end{equation}
Indeed, for any spherical harmonic $Y_{\ell,k}$, suppose $\lvert Y_{\ell,k}(x)\rvert$ attains $\|Y_{\ell,k}\|_{\infty}$ at the point $x^*\in\Sd$, then
\begin{equation*}
\|Y_{\ell,k}\|_{\infty} = \lvert Y_{\ell,k}(x^*)\rvert \leq \left(\sum_{k=1}^{Z(d,\ell)}\lvert Y_{\ell,k}(x^*)\rvert^2\right)^{1/2} = (Z(d,\ell)P^{(d)}_{\ell}(1)/\lvert\Sd\rvert)^{1/2} = \left(Z(d,\ell)/ \lvert\Sd\rvert\right)^{1/2}.
\end{equation*}
Besides, it is well known (see, e.g., \cite[pp. 38–39]{MR0199449}) that each spherical harmonic $Y_{\ell,k}$ of exact degree $\ell$ is an eigenfunction of the negative Laplace--Beltrami operator $-\Delta^*_d$ for $\Sd$ with eigenvalue 
\begin{equation}\label{equ:LBeigenvale}
\lambda_{\ell}:=\ell(\ell+d-1).
\end{equation}

The family $\{Y_{\ell,k}:k=1,\ldots,Z(d,\ell);\ell = 0,1,2,\ldots\}$ of spherical harmonics forms a complete $\Lt$-orthonormal (with respect to $\omega_d$) system for the Hilbert space $\Lt(\Sd)$. Thus, for any $f\in\Lt(\Sd)$, it can be represented by a Laplace--Fourier series
\begin{equation*}
f(x)=\sum_{\ell=0}^{\infty}\sum_{k=1}^{Z(d,\ell)}\hat{f}_{\ell,k} Y_{\ell,k}(x)
\end{equation*}
with coefficients
\begin{equation}\label{equ:LFcoefficient}
\hat{f}_{\ell,k}:=\left\langle f,Y_{\ell,k}\right\rangle=\int_{\Sd}f(x)Y_{\ell,k}(x)\text{d}\omega_d(x),\quad \ell=0,1,2,\ldots \text{ and }k = 1,2,\ldots,Z(d,\ell).
\end{equation}

The space $\Pn(\Sd)$ of all spherical polynomials of degree at most $n$ (i.e., the restriction to $\Sd$ of all polynomials in $\mathbb{R}^{d+1}$ of degree at most $n$) coincides with the span of all spherical harmonics up to (and including) degree $n$, and its dimension satisfies $\dim(\Pn(\Sd))=Z(d+1,n)$. The space $\Pn(\Sd)$ is also a reproducing kernel Hilbert space with the reproducing kernel
\begin{equation}\label{equ:kernel}
G_n(x,y) = \sum_{\ell=0}^n\sum_{k=1}^{Z(d,\ell)}Y_{\ell,k}(x)Y_{\ell,k}(y)
\end{equation}
in the sense that 
\begin{equation}\label{equ:reproducingproperty}
\left\langle \chi,G(\cdot,x) \right\rangle = \chi(x)\quad\forall \chi\in\Pn(\Sd),
\end{equation}
see, e.g., \cite{MR1115901}. Given $f\in \mathcal{C}(\Sd)$, it is often simpler in practice to express the hyperinterpolant $\mathcal{L}_nf$ using the reproducing kernel $G_n(\cdot,\cdot)$ defined by \eqref{equ:kernel}. By rearranging the summation, 
\begin{equation*}
\mathcal{L}_nf(x) = \sum_{\ell=0}^{n}\sum_{k=1}^{Z(d,\ell)}\left(\sum_{j=1}^mw_j f(x_j)Y_{\ell,k}(x_j)\right)Y_{\ell,k}(x) = \sum_{j=1}^mw_jf(x_j)G_n(x,x_j).
\end{equation*}
Since such a summation-rearranging procedure does not depend on the quadrature exactness, such an expression also applies to $\U_nf$ and $\mathcal{Q}_nf$. What makes the above three expressions different is the quadrature rules used for constructing different kinds of hyperinterpolants.

\subsection{Sobolev spaces}
The study of hyperinterpolation in a Sobolev space setting can be traced back to the work \cite{MR2274179} by Hesse and Sloan. The Sobolev space $\Hs(\Sd)$ on the sphere $\Sd$ may be defined for $s\geq 0$ as the set of all functions $f\in\Lt(\Sd)$ whose Laplace--Fourier coefficients \eqref{equ:LFcoefficient} satisfy
\begin{equation*}
\sum_{\ell=0}^{\infty}\sum_{k=1}^{Z(d,\ell)}(1+\lambda_{\ell})^s\lvert \hat{f}_{\ell,k}\rvert^2<\infty,
\end{equation*}
where $\lambda_{\ell}$ is given as \eqref{equ:LBeigenvale}. When $s=0$, we have $H^0(\Sd)=\Lt(\Sd)$. The norm in $\Hs(\Sd)$ may be defined as the square root of the expression on the left-hand side of the last inequality; however, in this paper, we shall take advantage of the freedom to define equivalent Sobolev space norms. Let $s>d/2$ be fixed and suppose we are given a sequence of positive real numbers $(a^{(s)}_{\ell})_{\ell\geq 0}$ satisfying 
\begin{equation}\label{equ:asl}
a^{(s)}_{\ell} \asymp (1+\lambda_{\ell})^{-s} \asymp (1+\ell)^{-2s},
\end{equation}
where $a_n\asymp b_n$ denotes that there exist $c_1,c_2>0$ independent of $n$ such that $c_1a_n\leq b_n\leq c_2b_n$. Then we can define a norm in $\Hs(\Sd)$ by
\begin{equation*}
\|f\|_{\Hs}:=\left(\sum_{\ell=0}^{\infty}\sum_{k=1}^{Z(d,\ell)}\frac{1}{a^{(s)}_{\ell}}\lvert \hat{f}_{\ell,k}\rvert^2\right)^{1/2}.
\end{equation*}
The norm $\|\cdot\|_{\Hs}$ therefore depends on the particular choice of the sequence $(a^{(s)}_{\ell})_{\ell\geq 0}$, but a change to this sequence merely leads to an \emph{equivalent} Sobolev norm.

The following lemmas are necessary for our analysis.

\begin{lemma}\label{lem:hsl2}
For any $f\in\Pn(\Sd)$, $\|f\|_{\Hs}\leq \tilde{c} \left(n+1\right)^s\|f\|_{\Lt}$, where $\tilde{c}>0$ is a constant. 
\end{lemma}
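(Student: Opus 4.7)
The plan is to exploit the fact that $f \in \Pn(\Sd)$ has a finite Laplace--Fourier expansion supported on degrees $\ell \leq n$, and then compare the two norms coefficient by coefficient via the asymptotic \eqref{equ:asl}.

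First I would write out both norms in terms of the Laplace--Fourier coefficients $\hat{f}_{\ell,k}$. Since $f \in \Pn(\Sd)$, we have $\hat{f}_{\ell,k} = 0$ whenever $\ell > n$, so Parseval's identity gives
\begin{equation*}
\|f\|_{\Lt}^2 = \sum_{\ell=0}^{n} \sum_{k=1}^{Z(d,\ell)} |\hat{f}_{\ell,k}|^2, \qquad \|f\|_{\Hs}^2 = \sum_{\ell=0}^{n} \sum_{k=1}^{Z(d,\ell)} \frac{1}{a^{(s)}_\ell} |\hat{f}_{\ell,k}|^2.
\end{equation*}
Next, by the equivalence \eqref{equ:asl}, there is a constant $c_2 > 0$ (independent of $\ell$) such that $1/a^{(s)}_\ell \leq c_2 (1+\ell)^{2s}$ for every $\ell \geq 0$. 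Since each relevant $\ell$ satisfies $\ell \leq n$, we get the uniform bound $1/a^{(s)}_\ell \leq c_2 (n+1)^{2s}$ on the support of the expansion.

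Substituting this bound into the expression for $\|f\|_{\Hs}^2$ and factoring out $(n+1)^{2s}$ yields
\begin{equation*}
\|f\|_{\Hs}^2 \leq c_2 (n+1)^{2s} \sum_{\ell=0}^{n} \sum_{k=1}^{Z(d,\ell)} |\hat{f}_{\ell,k}|^2 = c_2 (n+1)^{2s} \|f\|_{\Lt}^2.
\end{equation*}
Taking square roots gives the claim with $\tilde c = \sqrt{c_2}$.

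There is essentially no obstacle here: the argument is a direct Bernstein/Nikolskii-type trade between the two equivalent weights, and the only thing to verify is that the constant absorbed from \eqref{equ:asl} is independent of both $f$ and $n$, which it is because the equivalence in \eqref{equ:asl} holds uniformly in $\ell$. The hypothesis $s > d/2$ is not even needed for this step, although it is assumed globally in the Sobolev subsection.
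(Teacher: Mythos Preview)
Your proof is correct and takes essentially the same approach as the paper: expand both norms in Laplace--Fourier coefficients, bound the weights $1/a^{(s)}_\ell$ uniformly for $\ell\leq n$ via \eqref{equ:asl}, and take square roots. The only cosmetic difference is that the paper bounds $1/a^{(s)}_\ell$ by $1/a^{(s)}_n$ first and then invokes \eqref{equ:asl}, whereas you go directly to $c_2(1+\ell)^{2s}\leq c_2(n+1)^{2s}$; your route is arguably cleaner since it does not implicitly assume monotonicity of the sequence $(a^{(s)}_\ell)$.
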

\begin{proof}
It is straightforward that
\begin{equation*}
\|f\|_{\Hs}=\left(\sum_{\ell=0}^{n}\sum_{k=1}^{Z(d,\ell)}\frac{1}{a^{(s)}_{\ell}}\lvert\hat{f}_{\ell,k}\rvert^2\right)^{1/2}\leq \left(\frac{1}{a_n^{(s)}}\|f\|_{\Lt}^2\right)^{1/2}\leq \tilde{c} \left(n+1\right)^s\|f\|_{\Lt}\quad \forall f\in\Pn(\Sd),
\end{equation*}
where we used the order \eqref{equ:asl} of $(a^{(s)}_{\ell})_{\ell\geq 0}$. 
\end{proof}
\begin{lemma}\label{lem:sobolevclosed}
If $s>d/2$, then $\|fg\|_{\Hs}\leq       \check{c}\|f\|_{\Hs}\|g\|_{\Hs}$, where $\check{c}>0$ is some constant. 
\end{lemma}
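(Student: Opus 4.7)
The plan is to establish the Banach-algebra property of $H^s(\mathbb{S}^d)$ for $s > d/2$. I would split the argument into two ingredients.

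First, I would prove the Sobolev embedding $H^s(\mathbb{S}^d) \hookrightarrow \mathcal{C}(\mathbb{S}^d)$: for every $h \in H^s(\mathbb{S}^d)$, one has $\|h\|_\infty \leq C_s \|h\|_{\Hs}$ with a constant $C_s > 0$ depending only on $s$ and $d$. Starting from the Laplace--Fourier expansion of $h$, a Cauchy--Schwarz argument in the index $k$ combined with the addition-theorem identity $\sum_k |Y_{\ell,k}(x)|^2 = Z(d,\ell)/|\Sd|$, followed by a second Cauchy--Schwarz in $\ell$ against the weights $(1+\lambda_\ell)^{\pm s/2}$, yields
\begin{equation*}
|h(x)| \leq \bigg(\sum_{\ell \geq 0} \frac{Z(d,\ell)}{|\Sd|(1+\lambda_\ell)^s}\bigg)^{1/2} \|h\|_{\Hs}.
\end{equation*}
The displayed series converges precisely because $Z(d,\ell) \sim \ell^{d-1}$ by \eqref{equ:numberZ} and $\lambda_\ell \sim \ell^2$ by \eqref{equ:LBeigenvale}, so the summand is of order $\ell^{d-1-2s}$ and the condition $s > d/2$ makes it summable.

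Second, I would use a paraproduct-style decomposition on the spectral side to handle $fg$. Writing $f = \sum_\ell f_\ell$ and $g = \sum_{\ell'} g_{\ell'}$ with $f_\ell, g_{\ell'}$ the projections onto the degree-$\ell$ and degree-$\ell'$ spherical harmonic subspaces, the key structural fact is that each product $f_\ell g_{\ell'}$ has spectral support contained in $\{L : |\ell - \ell'| \leq L \leq \ell + \ell'\}$, a Clebsch--Gordan-type constraint for products of spherical harmonics. I would then decompose $fg$ into ``low-high'', ``high-low'', and ``high-high'' paraproducts, and bound each piece: the off-diagonal paraproducts via Moser-type estimates of the form $\|uv\|_{\Hs} \leq C(\|u\|_\infty \|v\|_{\Hs} + \|u\|_{\Hs} \|v\|_\infty)$, which combined with Step~1 deliver the desired product bound; and the diagonal ``high-high'' piece via a Schur-test argument exploiting the summability already used in Step~1.

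The main obstacle will be the ``high-high'' interaction piece, where both factors carry comparable, large spectral mass, so the resulting high-frequency content of the product cannot be dominated by the $L^\infty$ norm of either factor alone. The standard resolution uses the inequality $(1+\lambda_L)^{s/2} \leq C[(1+\lambda_\ell)^{s/2} + (1+\lambda_{\ell'})^{s/2}]$ whenever $L \leq \ell + \ell'$, together with a discrete Young-type convolution estimate, to reduce the estimate to the same summability already established in Step~1. A cleaner alternative would be to invoke this algebra property as a classical result for Sobolev spaces on compact Riemannian manifolds, as used for instance by Hesse and Sloan \cite{MR2274179} in the spherical setting.
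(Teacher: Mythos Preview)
Your proposal is correct in outline but takes a genuinely different route from the paper. The paper's proof is entirely soft: it invokes the norm equivalence between the spectrally defined $\Hs(\Sd)$ and the chart-defined Sobolev space $W^{s,2}(\Sd)$ (citing Triebel and Atkinson--Han), and then quotes the classical Banach-algebra property of $W^{s,2}$ on Lipschitz domains for $s>d/2$ (citing Adams and Runst--Sickel). No spectral computations are carried out at all.

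Your main argument, by contrast, is a direct spectral proof via paraproducts and Clebsch--Gordan support constraints. This is a legitimate and self-contained strategy, and the ingredients you list (the embedding via the addition theorem, the support rule $|\ell-\ell'|\leq L\leq \ell+\ell'$ for products of spherical harmonics, the subadditivity $(1+\lambda_L)^{s/2}\lesssim (1+\lambda_\ell)^{s/2}+(1+\lambda_{\ell'})^{s/2}$) are all correct. What you gain is an argument intrinsic to the spherical harmonic structure, avoiding charts; what you lose is brevity, since the high--high piece still requires a careful Schur or Young estimate that you only sketch. Your closing ``cleaner alternative'' is essentially the paper's actual proof, though the reference \cite{MR2274179} (Hesse--Sloan) is not quite the right citation for the algebra property itself; the paper points instead to standard Sobolev-space monographs.
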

\begin{proof}
For any Lipschitz domain $\Omega$, let $W^{s,2}(\Omega)$ be the Sobolev space of those functions in $L^2(\Omega)$ whose distributional derivatives up to (and including) order $s$ are in $L^2(\Omega)$. Note that the Sobolev spaces $\Hs(\Sd)$ can also be defined with the help of charts (that is, the so-called Sobolev spaces over boundaries), giving the space $W^{s,2}(\Sd)$ with an \emph{equivalent} norm, that is,
\begin{equation}\label{equ:sobolevlem1}
c_1\|f\|_{\Hs}\leq \|f\|_{W^{s,2}(\Sd)}\leq c_2\|f\|_{\Hs},
\end{equation}
where $c_1,c_2>0$ are some constants; see \cite[Chapter 7.3]{MR0350177} or \cite[Chapter 7.2.3]{MR2511061}. If $s>d/2$, then the Sobolev space $W^{s,2}(\Sd)$ is a Banach algebra, that is, for any $f,g\in W^{s,2}(\Sd)$,
\begin{equation}\label{equ:sobolevlem2}
\|fg\|_{W^{s,2}(\Sd)}  \leq c_3 \|f\|_{W^{s,2}(\Sd)}\|g\|_{W^{s,2}(\Sd)},
\end{equation}
where $c_3>0$ is some constant; we refer to \cite[Theorem 5.23]{MR0450957} or \cite[Section 6.1]{MR785568} for this result. Together with \eqref{equ:sobolevlem1} and \eqref{equ:sobolevlem2}, we have the desired estimate.
\end{proof}
\begin{remark}
The norm equivalence \eqref{equ:sobolevlem1} is also identified and utilized in some other spherical approximation schemes, see, e.g., \cite{MR3712286,MR2271729}.
\end{remark}

\subsection{Spherical $t$-designs and QMC designs}\label{sec:designs}

A spherical $t$-design, introduced in the remarkable paper \cite{delsarte1991geometriae} by Delsarte, Goethals, and Seidel, is a set of points $\{x_j\}_{j=1}^m\subset \Sd$ with the characterizing property that an equal-weight quadrature rule in these points exactly integrates all polynomials of degree at most $t$, that is,
\begin{equation}\label{equ:stdexactness}
\frac{1}{m}\sum_{j=1}^m\chi(x_j)=\int_{\Sd}\chi(x)\text{d}\omega_d(x)\quad\forall \chi\in\mathbb{P}_t.
\end{equation}

A majority of studies in the literature on spherical designs care about the relation between $m$ and $t$ in \eqref{equ:stdexactness}. It was known by Seymour and Zaslavsky \cite{MR744857} that a spherical $t$-design always exists if $m$ is sufficiently large, but no quantitative results on the size of $m$ were established. In the original manuscript \cite{delsarte1991geometriae} of spherical $t$-designs, lower bounds on $m$ of exact order $t^d$ were derived in the sense that
\begin{equation*}
       m\geq\begin{dcases}
\binom{d+t/2}{d}+\binom{d+t/2-1}{d}&\text{for even }t,\\
2\binom{d+\lfloor t/2\rfloor}{d}&\text{for odd }t;
       \end{dcases}
\end{equation*}
but according to Bannai and Damerell \cite{MR519045,MR576179}, the number $m$ of quadrature points could achieve these lower bounds only for a few small values of $t$. Bondarenko, Radchenko, and Viazovska asserted in \cite{MR3071504} that for each $m\geq ct^d$ with some positive but unknown constant $c>0$, there exists a spherical $t$-design in $\Sd$ consisting of $m$ points.

Quadrature rules \eqref{equ:quad} using spherical $t$-designs are known to have fast-convergence property when the integrand belongs to the Sobolev space $\Hs$; namely, given $s>d/2$, there exists $C(s,d)>0$ depending only on $s$ and $d$ such that for every $m$-point spherical $t$-design $\{x_j\}_{j=1}^m$ on $\Sd$, there holds 
\begin{equation}\label{equ:stderror}
\sup_{\substack{f\in\Hs(\Sd),\\ \|f\|_{\Hs}\leq 1}} \left\lvert\frac{1}{m}\sum_{j=1}^mf(x_j)-\int_{\Sd}f(x)\text{d}\omega_d\right\rvert\leq\frac{C(s,d)}{t^s}.
\end{equation}
The estimate \eqref{equ:stderror} was established gradually: It was first proved for the particular case $s=3/2$ and $d=2$ in \cite{MR2127668}, then extended to all $s>1$ for $d=2$ in \cite{MR2252093}, and finally extended to all $s>d/2$ and all $d\geq 2$ in \cite{MR2263736}. The condition $s>d/2$ is a natural one because functions to be approximated in this paper are assumed to be continuous, and by the Sobolev embedding theorem, $\Hs(\Sd)$ is continuously embedded in $\mathcal{C}(\Sd)$ if $s>d/2$.

If only spherical $t$-designs with $m\asymp t^d$ are concerned, then the upper bound on the error \eqref{equ:stderror} is of order $m^{-s/d}$. Here comes the concept of QMC designs, introduced by Brauchart, Saff, Sloan, and Womersley in \cite{MR3246811}: Given $s>d/2$, a sequence $\{x_j\}_{j=1}^m$ of $m$-point configurations on $\Sd$ with $m\rightarrow\infty$ is said to be a sequence of \emph{QMC designs} for $\Hs(\Sd)$ if there exists $c(s,d)>0$ independent of $m$ such that
\begin{equation}\label{equ:QMCerror}
\sup_{\substack{f\in\Hs(\Sd),\\ \|f\|_{\Hs}\leq 1}} \left\lvert\frac{1}{m}\sum_{j=1}^mf(x_j)-\int_{\Sd}f(x)\text{d}\omega_d\right\rvert\leq\frac{c(s,d)}{m^{s/d}}.
\end{equation}
In a nutshell, quadrature rules using QMC designs provide the same asymptotic order of convergence as exact rules (e.g., rules using spherical $t$-designs) when the integrand belongs to the Sobolev space $H^s$, but are easier to obtain numerically. For more studies on the numerical integration on the sphere with the integrand belonging to a Sobolev space, we refer the reader to \cite{MR2929076,MR3365840,hesse2010numerical,MR2123223}. Equal-weight numerical integration rules with the integrand belonging to many other spaces of smoothness also attracts much interest, see, e.g., \cite{MR2558691,MR2391005,MR3038697,MR4438170,MR3614894,MR2059741,MR3530965,MR1617765}, to name a few.

A substantial definition related to QMC designs $\{x_j\}_{j=1}^m$ is the \emph{QMC strength}, denoted by $s^*$. For every sequence of QMC designs $\{x_j\}_{j=1}^m$, there is some number $s^*$ such that $\{x_j\}_{j=1}^m$ is a sequence of QMC designs for all $s$ satisfying $d/2<s\leq s^*$ and is not a QMC design for $s>s^*$. Even if the integrand $f$ is infinitely differentiable, the convergence rate of the numerical integration error \eqref{equ:QMCerror} using a QMC design with strength $s^*$ is controlled by $m^{-s^*/d}$. 

\section{General framework of unfettered hyperinterpolation}\label{sec:unfetteredtheory}

With the aid of the reproducing property \eqref{equ:reproducingproperty}, the Marcinkiewicz--Zygmund property \eqref{equ:etaassumption} implies the following lemma.

\begin{lemma}\label{lem}
For any $\chi\in\Pn(\Sd)$, we have

{\rm{(a)}} $(1-\eta)\|\chi\|_{\Lt}^2\leq \left\langle \U_n\chi,\chi\right\rangle\leq(1+\eta)\|\chi\|_{\Lt}^2.$

{\rm{(b)}} $(1-\eta)\|\chi\|_{\Lt}\leq\|\U_n\chi\|_{\Lt}\leq(1+\eta)\|\chi\|_{\Lt}$.

{\rm{(c)}} $\|\U_n\chi-\chi\|_{\Lt}^2\leq(\eta^2+4\eta)\|\chi\|_{\Lt}^2.$
\end{lemma}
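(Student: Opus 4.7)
The plan is to exploit the fact that $\U_n$ acts like the identity on $\Pn$ up to numerical integration error, and reduce everything to the discrete inner product $\langle\cdot,\cdot\rangle_m$ so that the Marcinkiewicz--Zygmund property \eqref{equ:etaassumption} can be applied directly.

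First I would tackle (a) by using the reproducing-kernel representation $\U_n\chi(x)=\sum_{j=1}^m w_j\chi(x_j)G_n(x,x_j)$ noted just after \eqref{equ:reproducingproperty}. Taking the $\Lt$ inner product with $\chi\in\Pn(\Sd)$ and pulling the sum outside gives
\[
\langle\U_n\chi,\chi\rangle=\sum_{j=1}^m w_j\chi(x_j)\langle G_n(\cdot,x_j),\chi\rangle=\sum_{j=1}^m w_j\chi(x_j)^2,
\]
where the second equality is the reproducing property \eqref{equ:reproducingproperty}. The Marcinkiewicz--Zygmund property \eqref{equ:etaassumption} applied to $\chi\in\Pn(\Sd)$ then immediately yields the two-sided bound in (a).

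Next I would prove (b). For the lower bound, combine (a) with Cauchy--Schwarz in $\Lt$:
\[
(1-\eta)\|\chi\|_{\Lt}^2\le\langle\U_n\chi,\chi\rangle\le\|\U_n\chi\|_{\Lt}\|\chi\|_{\Lt},
\]
which gives $(1-\eta)\|\chi\|_{\Lt}\le\|\U_n\chi\|_{\Lt}$. For the upper bound, I would first observe that for any $f,g\in\Pn(\Sd)$ one has $\langle\U_nf,g\rangle=\langle f,g\rangle_m$ (again by the reproducing-kernel expansion plus \eqref{equ:reproducingproperty}), so $\U_n$ is self-adjoint on $\Pn(\Sd)$. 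Setting $f=g=\U_n\chi\in\Pn(\Sd)$ gives $\|\U_n\chi\|_{\Lt}^2=\langle\U_n\chi,\chi\rangle_m$. Applying discrete Cauchy--Schwarz and then \eqref{equ:etaassumption} to both $\chi$ and $\U_n\chi$ (both polynomials in $\Pn(\Sd)$) yields
\[
\|\U_n\chi\|_{\Lt}^2\le\Bigl(\sum_{j=1}^m w_j(\U_n\chi)(x_j)^2\Bigr)^{1/2}\Bigl(\sum_{j=1}^m w_j\chi(x_j)^2\Bigr)^{1/2}\le(1+\eta)\|\U_n\chi\|_{\Lt}\|\chi\|_{\Lt},
\]
whence $\|\U_n\chi\|_{\Lt}\le(1+\eta)\|\chi\|_{\Lt}$.

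Finally, (c) is a purely algebraic consequence of (a) and (b): expand
\[
\|\U_n\chi-\chi\|_{\Lt}^2=\|\U_n\chi\|_{\Lt}^2-2\langle\U_n\chi,\chi\rangle+\|\chi\|_{\Lt}^2,
\]
bound the first term above by $(1+\eta)^2\|\chi\|_{\Lt}^2$ using (b) and the middle term below by $(1-\eta)\|\chi\|_{\Lt}^2$ using (a), then simplify $(1+\eta)^2-2(1-\eta)+1=\eta^2+4\eta$. No step is genuinely difficult; the only subtlety is recognising that to get the sharp upper constant $(1+\eta)$ in (b) one must route through the discrete inner product via self-adjointness of $\U_n$ on $\Pn(\Sd)$ rather than trying a more naive Cauchy--Schwarz in $\Lt$ alone.
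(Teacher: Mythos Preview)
Your proof is correct and follows essentially the same route as the paper's: the reproducing property yields $\langle\U_n\chi,\chi\rangle=\sum_j w_j\chi(x_j)^2$ and $\|\U_n\chi\|_{\Lt}^2=\sum_j w_j\chi(x_j)\,\U_n\chi(x_j)$, to which discrete Cauchy--Schwarz and the Marcinkiewicz--Zygmund property are applied, and part (c) is the same expansion. One small slip: in (b) you want $f=\chi$, $g=\U_n\chi$ (not $f=g=\U_n\chi$) in your identity $\langle\U_nf,g\rangle=\langle f,g\rangle_m$ to obtain $\|\U_n\chi\|_{\Lt}^2=\langle\U_n\chi,\chi\rangle_m$.
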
 

\begin{proof} (a) The reproducing property \eqref{equ:reproducingproperty} of $G_n(\cdot,\cdot)$ implies
       \begin{equation*}\begin{split}
       \left\langle \U_n\chi,\chi\right\rangle &= \left\langle \sum_{j=1}^mw_j\chi(x_j)G_n(x,x_j),\chi(x)\right\rangle=\sum_{j=1}^mw_j\chi(x_j)\left\langle G_n(x,x_j),\chi(x)\right\rangle=\sum_{j=1}^nw_j\chi(x_j)^2.
       \end{split}\end{equation*}
       Thus by the Marcinkiewicz--Zygmund property \eqref{equ:etaassumption},
       \begin{equation*}
       (1-\eta)\|\chi\|_{\Lt}^2=(1-\eta)\int_{\Sd}\chi^2\text{d}\omega_d\leq\sum_{j=1}^nw_j\chi(x_j)^2\leq(1+\eta)\int_{\Sd}\chi^2\text{d}\omega_d=(1+\eta)\|\chi\|_{\Lt}^2.
       \end{equation*}

       (b) By part (a), we have $(1-\eta)\|\chi\|_{\Lt}^2\leq\left\langle \U_n\chi,\chi\right\rangle\leq\|\U_n\chi\|_{\Lt}\|\chi\|_{\Lt}$, leading to $(1-\eta)\|\chi\|_{\Lt}\leq\|\U_n\chi\|_{\Lt}$. We also have
              \begin{equation*}\begin{split}
              \|\U_n\chi\|_{\Lt}^2&\leq\left\langle \U_n\chi,\U_n\chi\right\rangle
              =\left\langle \sum_{j=1}^mw_j\chi(x_j)G_n(x,x_j),\U_n\chi(x)\right\rangle=\sum_{j=1}^mw_j\chi(x_j)\U_n\chi(x_j)\\
              &\leq \left(\sum_{j=1}^mw_j\chi(x_j)^2\right)^{1/2}\left(\sum_{j=1}^mw_j\left(\U_n\chi(x_j)\right)^2\right)^{1/2}
              \leq (1+\eta)\|\chi\|_{\Lt}\|\U_n\chi\|_{\Lt},
              \end{split}\end{equation*}
              where the first inequality is due to the Cauchy--Schwarz inequality, and the second one is ensured by the Marcinkiewicz--Zygmund property \eqref{equ:etaassumption}. Thus part (b) is proved.

       (c) Using parts (a) and (b) above, it is straightforward that
       \begin{equation*}\begin{split}
       \|\U_n\chi-\chi\|_{\Lt}^2
       &=\|\U_n\chi\|_{\Lt}^2-2\left\langle \U_n\chi,\chi\right\rangle+\|\chi\|_{\Lt}^2
       \leq (1+\eta)^2\|\chi\|_{\Lt}^2-2(1-\eta)\|\chi\|_{\Lt}^2+\|\chi\|_{\Lt}^2\\
       &=(\eta^2+4\eta)\|\chi\|_{\Lt}^2.
       \end{split}\end{equation*}
Hence this lemma is proved.
\end{proof}

We are now ready to state our main theorem.

\begin{theorem}\label{thm}
Given $f\in \mathcal{C}(\Sd)$, let $\U_nf\in\mathbb{P}_n$ be its unfettered hyperinterpolant defined by \eqref{equ:unfetteredhyper}, where the $m$-point positive-weight quadrature rule \eqref{equ:quad} is only assumed to have the Marcinkiewicz--Zygmund property \eqref{equ:etaassumption} with $\eta\in[0,1)$. Then
\begin{equation}\label{equ:stability}
\|\U_nf\|_{\Lt}\leq\sqrt{1+\eta}\left(\sum_{j=1}^mw_j\right)^{1/2}\|f\|_{\infty},
\end{equation}
and
\begin{equation}\label{equ:error}
\|\U_nf-f\|_{\Lt}\leq \left(\sqrt{1+\eta}\left(\sum_{j=1}^mw_j\right)^{1/2}+\lvert\Sd\rvert^{1/2}\right)E_n(f)+\sqrt{\eta^2+4\eta}\|\chi^*\|_{\Lt},
\end{equation}
where $E_n(f)$ denotes the best uniform error of $f$ by a polynomial in $\Pn(\Sd)$ and $\chi^*\in\Pn(\Sd)$ denotes the best approximation polynomial of $f$ in $\Pn(\Sd)$ in the sense of $\|f-\chi^*\|_{\infty}=E_n(f)$. 
\end{theorem}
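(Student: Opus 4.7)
The plan is to prove the stability estimate \eqref{equ:stability} first as a self-standing step, and then deduce the error estimate \eqref{equ:error} by the classical ``add and subtract the best polynomial approximation'' technique, in which Lemma \ref{lem}(c) will take care of the ``discretization residue'' on $\Pn(\Sd)$.

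For the stability bound, I would start from the kernel form
\[
\U_nf(x)=\sum_{j=1}^m w_j f(x_j) G_n(x,x_j),
\]
which holds because the rearrangement leading to it depends only on the definition of $\U_nf$ and not on quadrature exactness. Since $\U_nf\in\Pn(\Sd)$, the reproducing property \eqref{equ:reproducingproperty} yields
\[
\|\U_nf\|_{\Lt}^2=\langle \U_nf,\U_nf\rangle=\sum_{j=1}^m w_j f(x_j)\,\U_nf(x_j).
\]
Applying the Cauchy--Schwarz inequality for the discrete inner product $\langle\cdot,\cdot\rangle_m$, bounding $\sum_j w_j f(x_j)^2\le (\sum_j w_j)\|f\|_\infty^2$ trivially, and bounding $\sum_j w_j(\U_nf(x_j))^2\le (1+\eta)\|\U_nf\|_{\Lt}^2$ by the Marcinkiewicz--Zygmund property \eqref{equ:etaassumption} applied to the polynomial $\U_nf\in\Pn(\Sd)$, I divide through by $\|\U_nf\|_{\Lt}$ to obtain \eqref{equ:stability}.

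For the error estimate, let $\chi^*\in\Pn(\Sd)$ be the best uniform approximant, so $\|f-\chi^*\|_\infty=E_n(f)$. Since $\U_n$ is the identity on $\Pn(\Sd)$ up to the controlled perturbation measured by Lemma \ref{lem}(c), I split
\[
\U_nf-f=\U_n(f-\chi^*)+(\U_n\chi^*-\chi^*)+(\chi^*-f)
\]
and apply the triangle inequality. The first term is controlled by \eqref{equ:stability} applied to $f-\chi^*$, giving $\sqrt{1+\eta}(\sum_j w_j)^{1/2}E_n(f)$. The second term is controlled by Lemma \ref{lem}(c), giving $\sqrt{\eta^2+4\eta}\,\|\chi^*\|_{\Lt}$. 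The third term is handled by the trivial embedding $\|\cdot\|_{\Lt}\le |\Sd|^{1/2}\|\cdot\|_\infty$, yielding $|\Sd|^{1/2}E_n(f)$. Summing these three contributions gives exactly \eqref{equ:error}.

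I do not anticipate a genuine obstacle here; the whole argument is an adaptation of Sloan's original hyperinterpolation estimate \cite{sloan1995polynomial}, with the exactness assumption replaced by the Marcinkiewicz--Zygmund property in two places: once in the stability step (where $\eta=0$ would collapse $1+\eta$ to $1$ and $\sum_j w_j$ to $|\Sd|$) and once in the Lemma \ref{lem}(c) step (where $\eta=0$ would eliminate the extra term $\sqrt{\eta^2+4\eta}\|\chi^*\|_{\Lt}$). The only point that requires a little care is to make sure the Cauchy--Schwarz application inside the stability proof is set up so that the factor picked up is $(1+\eta)^{1/2}$ rather than $(1-\eta)^{-1/2}$; using the upper, not the lower, side of the Marcinkiewicz--Zygmund inequality on the $(\U_nf)^2$ sum achieves this.
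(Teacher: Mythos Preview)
Your proposal is correct and follows essentially the same approach as the paper's proof: the stability bound via the reproducing property, Cauchy--Schwarz on $\langle\cdot,\cdot\rangle_m$, and the upper MZ inequality on $(\U_nf)^2$ is identical, and your three-term split $\U_n(f-\chi^*)+(\U_n\chi^*-\chi^*)+(\chi^*-f)$ with Lemma~\ref{lem}(c) on the middle term is exactly what the paper does (the paper writes the split for a generic $\chi\in\Pn(\Sd)$ before specializing to $\chi^*$, but this is cosmetic).
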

\begin{proof}
For any $f\in \mathcal{C}(\Sd)$, we have $\U_nf\in\mathbb{P}_n$ and hence $\left\langle G_n(x,x_j),\U_nf(x)\right\rangle=\U_nf(x_j)$. Thus, 
\begin{equation*}\begin{split}
\left\langle \U_nf,\U_nf\right\rangle
& = \left\langle \sum_{j=1}^mw_jf(x_j)G_n(x,x_j),\U_nf(x)\right\rangle = \sum_{j=1}^mw_jf(x_j)\U_nf(x_j)\\
& \leq \left(\sum_{j=1}^mw_jf(x_j)^2\right)^{1/2}\left(\sum_{j=1}^mw_j\left(\U_n\chi(x_j)\right)^2\right)^{1/2}\leq \left(\sum_{j=1}^mw_j\right)^{1/2}\|f\|_{\infty}\sqrt{1+\eta}\|\U_nf\|_{\Lt},
\end{split}\end{equation*}
where the first inequality is due to the Cauchy--Schwarz inequality and the second one holds by using $\sum_{j=1}^mw_jf(x_j)^2\leq\|f\|_{\infty}^2\sum_{j=1}^mw_j$ and the Marcinkiewicz--Zygmund property \eqref{equ:etaassumption}. This estimate immediately implies the stability result \eqref{equ:stability}.

The error bound \eqref{equ:error} is obtained by the following argument. For any $\chi\in\mathbb{P}_n$, we have
\begin{equation*}\begin{split}
\|\U_nf-f\|_{\Lt}
& = \|\U_n(f-\chi)+(\chi-f)+(\U_n\chi-\chi)\|_{\Lt} \leq \|\U_n(f-\chi)\|_{\Lt} + \|f-\chi\|_{\Lt}+\|\U_n\chi-\chi\|_{\Lt}\\
& \leq \sqrt{1+\eta}\left(\sum_{j=1}^mw_j\right)^{1/2}\|f-\chi\|_{\infty}+\lvert\Sd\rvert^{1/2}\|f-\chi\|_{\infty} + \|\U_n\chi-\chi\|_{\Lt}.
\end{split}\end{equation*}
It follows, since this estimate holds for all polynomials in $\Pn(\Sd)$, that
\begin{equation*}
\|\U_nf-f\|_{\Lt} \leq  \left(\sqrt{1+\eta}\left(\sum_{j=1}^mw_j\right)^{1/2}+\lvert\Sd\rvert^{1/2}\right)E_n(f) + \|\U_n\chi^*-\chi^*\|_{\Lt}.
\end{equation*}
By part (c) of Lemma \ref{lem}, we have $\|\U_n\chi^*-\chi^*\|_{\Lt}\leq \sqrt{\eta^2+4\eta}\|\chi^*\|_{\Lt}$.
\end{proof}

\subsection{Connections in the literature}

If the quadrature rule \eqref{equ:quad} is additional assumed to integral all constant functions (polynomials of degree zero) exactly, that is, $\sum_{j=1}^mw_j=\lvert\Sd\rvert$,
then we have $\|\U_nf\|_{\Lt}\leq\sqrt{1+\eta}\lvert\Sd\rvert^{1/2}\|f\|_{\infty}$ and 
\begin{equation*}
\|\U_nf-f\|_{\Lt}\leq \left(\sqrt{1+\eta}+1\right)\lvert\Sd\rvert^{1/2}E_n(f)+\sqrt{\eta^2+4\eta}\|\chi^*\|_{\Lt}.
\end{equation*}

If the quadrature rule \eqref{equ:quad} exactly integrate all polynomials of degree at most $2n$, i.e., the constant $\eta$ is zero, then the stability result \eqref{equ:stability} and error bound \eqref{equ:error} reduce to the classical results of hyperinterpolation in \cite{sloan1995polynomial}; namely, $\|\U_nf\|_{\Lt}\leq \lvert\Sd\rvert^{1/2}\|f\|_{\infty}$ and 
\begin{equation*}
\|\U_nf-f\|_{\Lt}\leq 2\lvert\Sd\rvert^{1/2}E_n(f).
\end{equation*}

If the quadrature rule \eqref{equ:quad} has exactness degree $n+n'$ with $0<n'\leq n$, then $\U_n\chi = \chi$ for all $\chi\in\mathbb{P}_{n'}(\Sd)$, see \cite[Lemma 2.1]{an2022quadrature}. By the stability result \eqref{equ:stability}, we have for any $\chi\in\mathbb{P}_{n'}(\Sd)$,
\begin{equation*}
\|\U_nf-f\|_{\Lt} \leq\|\U_n(f-\chi) - (f-\chi)\|_{\Lt} \leq \|\U_n(f-\chi)\|_{\Lt}+\|f-\chi\|_{\Lt}.
\end{equation*}
As this estimate holds for all $\chi\in\mathbb{P}_{n'}(\Sd)$, it is straightforward that
\begin{equation}\label{equ:estimatenn'}
\|\U_nf-f\|_{\Lt} \leq \left(\sqrt{1+\eta}+1\right)\lvert\Sd\rvert^{1/2}E_{n'}(f),
\end{equation}
which has the same convergence rate in terms of $E_{n'}(f)$ as our previous estimate \eqref{equ:BIT} in \cite{an2022quadrature}. In \cite{an2022quadrature}, we make use of the discrete orthogonal projection property (see \cite[Lemma 3.1]{an2022quadrature}) to obtain the estimate \eqref{equ:BIT}, while in this paper we utilize the reproducing property \eqref{equ:reproducingproperty} for the estimate \eqref{equ:estimatenn'}.

Moreover, in light of Theorem \ref{thm} and the study on spherical hyperinterpolation in a Sobolev space setting by Hesse and Sloan in \cite{MR2274179}, we have the following Sobolev estimates, which reduce to their results in \cite{MR2274179} when the exactness degree $2n$ is assumed. For simplicity and without loss of generality, we assume $\sum_{j=1}^mw_j=\lvert\Sd\rvert$ in Corollary \ref{cor:sobolev}. Note that $\Hs(\Sd)\subset \Lt(\Sd)$.
\begin{corollary}\label{cor:sobolev}
Let $d\geq 2$, and let $t$ and $s$ be fixed real numbers with $s\geq t \geq 0$ and $s\geq d/2$. Under the conditions of Theorem \ref{thm}, for any unfettered hyperinterpolation operator $\U_n:\Hs(\Sd)\rightarrow H^t(\Sd)$, there hold
\begin{equation}\label{equ:errorsob}
\|\U_nf\|_{H^t} \leq \tilde{c} \left[ \left(\sqrt{1+\eta}\lvert\Sd\rvert^{1/2}+1\right)(n+1)^{d/2+t-s}\|f\|_{H^s} +  (n+1)^t\sqrt{\eta^2+4\eta}\|f\|_{\Lt}\right] + \|f\|_{\Hs}
\end{equation}
and
\begin{equation}\label{equ:stabilitysob}
\|\U_nf-f\|_{H^t} \leq \tilde{c} \left[ \left(\sqrt{1+\eta}\lvert\Sd\rvert^{1/2}+1\right)(n+1)^{d/2+t-s} E_n(f;\Hs(\Sd)) +  \tilde{c}(n+1)^t\sqrt{\eta^2+4\eta}\|f\|_{\Lt}\right],
\end{equation}
where $\tilde{c}>0$ is some constant that may vary line to line, and $E_n(f;\Hs(\Sd))$ is the best $\Hs$ approximation of $f\in\Hs(\Sd)$ by a polynomial in $\Pn(\Sd)$, that is, $E_n(f;\Hs(\Sd)):=\inf_{\chi\in\Pn(\Sd)}\|f-\chi\|_{\Hs}$.
\end{corollary}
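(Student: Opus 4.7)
The plan is to follow the Hesse--Sloan strategy and split $\U_nf-f=(\U_nf-\mathcal{P}_nf)+(\mathcal{P}_nf-f)$, analyzing the polynomial discrepancy and the Sobolev residual by different tools. Because $\U_nf-\mathcal{P}_nf\in\Pn(\Sd)$, Lemma \ref{lem:hsl2} instantly reduces its $H^t$ norm to its $\Lt$ norm at the cost of a factor $(n+1)^t$. I then further decompose $\U_nf-\mathcal{P}_nf=\U_n(f-\mathcal{P}_nf)+(\U_n\mathcal{P}_nf-\mathcal{P}_nf)$: the first piece is handled by the stability bound \eqref{equ:stability} of Theorem \ref{thm} (using the normalisation $\sum_j w_j=|\Sd|$), yielding $\sqrt{1+\eta}\,|\Sd|^{1/2}\|f-\mathcal{P}_nf\|_\infty$, while the second is controlled by Lemma \ref{lem}(c), giving $\sqrt{\eta^2+4\eta}\,\|\mathcal{P}_nf\|_{\Lt}\leq\sqrt{\eta^2+4\eta}\,\|f\|_{\Lt}$ since $\mathcal{P}_n$ is the $\Lt$-orthogonal projection.

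For the residual $\mathcal{P}_nf-f$, whose Laplace--Fourier expansion is supported on degrees $\ell>n$, the order relation $(a_\ell^{(t)})^{-1}\asymp(1+\ell)^{2t}\leq(n+1)^{2(t-s)}(a_\ell^{(s)})^{-1}$, valid for $\ell>n$ and $t\leq s$, gives directly
\begin{equation*}
\|\mathcal{P}_nf-f\|_{H^t}\leq \tilde{c}(n+1)^{t-s}\|\mathcal{P}_nf-f\|_{\Hs}=\tilde{c}(n+1)^{t-s}E_n(f;\Hs(\Sd)),
\end{equation*}
the last equality holding because the orthogonal projection $\mathcal{P}_nf$ realises the $\Hs$-best polynomial approximation. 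Since $d\geq 2$ forces $(n+1)^{t-s}\leq(n+1)^{d/2+t-s}$, this piece is absorbed into the first term of \eqref{equ:stabilitysob}.

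The main obstacle is the sharp estimate
\begin{equation*}
\|f-\mathcal{P}_nf\|_\infty\leq c\,(n+1)^{d/2-s}E_n(f;\Hs(\Sd)),
\end{equation*}
which is strictly stronger than what a plain Sobolev embedding would yield. My plan is to decompose the tail $f-\mathcal{P}_nf=\sum_{\ell>n}\phi_\ell$ into homogeneous spherical-harmonic blocks, use the addition-theorem consequence $\|\phi_\ell\|_\infty\leq(Z(d,\ell)/|\Sd|)^{1/2}\|\phi_\ell\|_{\Lt}$ implicit in \eqref{equ:sphericalharmincsbound}, and then apply Cauchy--Schwarz with the Sobolev weights $a_\ell^{(s)}$ to obtain
\begin{equation*}
\|f-\mathcal{P}_nf\|_\infty\leq|\Sd|^{-1/2}\left(\sum_{\ell>n}a_\ell^{(s)}Z(d,\ell)\right)^{1/2}\|f-\mathcal{P}_nf\|_{\Hs}.
\end{equation*}
The hypothesis $s>d/2$ is exactly what makes $a_\ell^{(s)}Z(d,\ell)\sim\ell^{d-1-2s}$ summable, with tail of order $n^{d-2s}$, producing the required $(n+1)^{d/2-s}$ rate. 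Assembling the three bounds then yields \eqref{equ:stabilitysob}, and \eqref{equ:errorsob} follows from the triangle inequality $\|\U_nf\|_{H^t}\leq\|\U_nf-f\|_{H^t}+\|f\|_{H^t}$ together with $\|f\|_{H^t}\leq\|f\|_{\Hs}$ for $t\leq s$ and the trivial bound $E_n(f;\Hs(\Sd))\leq\|f\|_{\Hs}$.
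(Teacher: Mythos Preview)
Your proof is correct and follows essentially the same route as the paper's own argument: the same three-term decomposition $\U_nf-f=\U_n(f-\mathcal{P}_nf)+(\mathcal{P}_nf-f)+(\U_n\mathcal{P}_nf-\mathcal{P}_nf)$, the same use of Lemma~\ref{lem:hsl2} to pass from $H^t$ to $\Lt$ on polynomial terms, the stability bound \eqref{equ:stability} for the first piece, Lemma~\ref{lem}(c) for the third, and then the triangle inequality for \eqref{equ:errorsob}. The only difference is that where the paper invokes \cite[Lemma 3.5]{MR2274179} as a black box for the two residual estimates $\|f-\mathcal{P}_nf\|_\infty\lesssim(n+1)^{d/2-s}\|f-\mathcal{P}_nf\|_{\Hs}$ and $\|f-\mathcal{P}_nf\|_{H^t}\lesssim(n+1)^{t-s}\|f-\mathcal{P}_nf\|_{\Hs}$, you supply self-contained derivations---the spectral argument for the second and the addition-theorem plus Cauchy--Schwarz argument for the first---which is a nice touch but not a different strategy.
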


\begin{remark}
 When the exactness degree of the rule \eqref{equ:quad} is assumed to be $2n$, $\eta =0$ and the results \eqref{equ:stabilitysob} and \eqref{equ:errorsob} reduce to the respective results of the original hyperinterpolation (some constants may be different) derived by Hesse and Sloan in \cite{MR2274179}.
 \end{remark}

\begin{proof}
Similar to the decomposition of $\|\U_nf-f\|_{\Lt}$ in the proof of Theorem \ref{thm}, we have
\begin{equation}\label{equ:sobolevdecomp}
\|\U_nf-f\|_{H^t} \leq \|\U_n(f-\mathcal{P}_nf)\|_{H^t} + \|\mathcal{P}_nf-f\|_{H^t} + \|\U_n(\mathcal{P}_nf)-\mathcal{P}_nf\|_{H^t}.
\end{equation}
The first term on the right-hand side of \eqref{equ:sobolevdecomp} can be bounded by
\begin{equation*}\begin{split}
\|\U_n(f-\mathcal{P}_nf)\|_{H^t} &\leq \tilde{c}(n+1)^t\|\U_n(f-\mathcal{P}_nf)\|_{\Lt} \leq \tilde{c}(n+1)^t\sqrt{1+\eta}\lvert\Sd\rvert^{1/2}\|f-\mathcal{P}_nf\|_{\infty}\\
& \leq \tilde{c} (n+1)^t\sqrt{1+\eta}\lvert\Sd\rvert^{1/2} (n+1)^{d/2-s} \|f-\mathcal{P}_nf\|_{\Hs},
\end{split}
\end{equation*}
where the first inequality is due to Lemma \ref{lem:hsl2}, the second is due to the stability result \eqref{equ:stability}, and the third is due to \cite[Lemma 3.5]{MR2274179}. This lemma also guarantees that
\begin{equation*}
\|\mathcal{P}_nf-f\|_{H^t}  \leq \tilde{c} (n+1)^{t-s}\|\mathcal{P}_nf-f\|_{H^s}.
\end{equation*}
The third term can be estimated as 
\begin{equation*}\begin{split}
\|\U_n(\mathcal{P}_nf)-\mathcal{P}_nf\|_{H^t}
& \leq \tilde{c}(n+1)^t\|\U_n(\mathcal{P}_nf)-\mathcal{P}_nf\|_{\Lt} \leq \tilde{c}(n+1)^t\sqrt{\eta^2+4\eta}\|\mathcal{P}_nf\|_{\Lt}\\
&  \leq \tilde{c}(n+1)^t\sqrt{\eta^2+4\eta}\|f\|_{\Lt}
\end{split}
\end{equation*}
where the first inequality is due to Lemma \ref{lem:hsl2}, the second is due to part (c) of Lemma \ref{lem}, and the third is due to the fact that the norm of $\mathcal{P}_n$ as an operator from $\Lt(\Sd)$ onto $\Lt(\Sd)$ is 1. Thus we have 
\begin{equation*}
\|\U_nf-f\|_{H^t} \leq \tilde{c} \left[ \left(\sqrt{1+\eta}\lvert\Sd\rvert^{1/2}+1\right)(n+1)^{d/2+t-s} E_n(f;\Hs(\Sd)) +  (n+1)^t\sqrt{\eta^2+4\eta}\|f\|_{\Lt}\right],
\end{equation*}
where $E_n(f;\Hs(\Sd)) = \|f-\mathcal{P}_nf\|_{\Hs}$ is verified by \cite[Equ. (3.22)]{MR2274179}.

As $\|f-\mathcal{P}_nf\|_{\Hs}\leq \|f\|_{H^s}$ and $\|f\|_{H^t}\leq \|f\|_{H^s}$, we have
\begin{equation*}\begin{split}
\|\U_nf\|_{H^t}& \leq \|\U_nf-f\|_{H^t}  + \|f\|_{H^t}\\
&\leq \tilde{c} \left[ \left(\sqrt{1+\eta}\lvert\Sd\rvert^{1/2}+1\right)(n+1)^{d/2+t-s}\|f\|_{H^s} +  (n+1)^t\sqrt{\eta^2+4\eta}\|f\|_{\Lt}\right] + \|f\|_{\Hs},
\end{split}\end{equation*}
which completes the proof of this corollary.
\end{proof}

\subsection{Scattered data}

Now together with the work \cite{MR2475947} of Le Gia and Mhaskar, we can obtain a probabilistic description of Theorem \ref{thm}.
\begin{lemma}[{\cite[p. 463]{MR2475947}}]\label{lem:legiamhaskar}
Let the quadrature rule for constructing the unfettered hyperinterplants be an equal-weight rule \eqref{equ:equalweightquad} with an independent random sample of $m$ points drawn from the distribution $\omega_d$, and let $\gamma>0$ and $\eta\in(0,1)$. Then there exists a constant $\bar{c}:=\bar{c}(\gamma)$ such that if $m\geq \bar{c} n ^d\log{n}/\eta^2$, then the Marcinkiewicz--Zygmund property \eqref{equ:etaassumption} holds with probability exceeding $1-\bar{c}n^{-\gamma}$.
\end{lemma}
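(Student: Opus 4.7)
The plan is to reduce the Marcinkiewicz--Zygmund property to a matrix concentration inequality. Parametrize any $\chi\in\Pn(\Sd)$ by its Fourier coefficient vector $c=(c_{\ell,k})\in\mathbb{R}^{Z(d+1,n)}$, so that $\|\chi\|_{\Lt}^2=\|c\|_2^2$ by orthonormality. Setting $Y(x):=(Y_{\ell,k}(x))_{\ell,k}$ and
\begin{equation*}
M:=\frac{\lvert\Sd\rvert}{m}\sum_{j=1}^m Y(x_j)\,Y(x_j)^\top,
\end{equation*}
a direct calculation shows $\sum_{j=1}^m w_j\chi(x_j)^2 = c^\top M c$ and $\mathbb{E}[M]=I$ when $x_j$ are i.i.d.\ from $\omega_d/\lvert\Sd\rvert$. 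Thus the Marcinkiewicz--Zygmund property \eqref{equ:etaassumption} with constant $\eta$ is exactly $\|M-I\|_{\mathrm{op}}\le\eta$.

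First I would bound the operator norm of each summand. By the addition theorem and $P_\ell^{(d)}(1)=1$,
\begin{equation*}
\|Y(x)\|_2^2=\sum_{\ell=0}^n\sum_{k=1}^{Z(d,\ell)}Y_{\ell,k}(x)^2=\frac{1}{\lvert\Sd\rvert}\sum_{\ell=0}^n Z(d,\ell)=\frac{Z(d+1,n)}{\lvert\Sd\rvert},
\end{equation*}
so every rank-one summand $A_j:=\frac{\lvert\Sd\rvert}{m}Y(x_j)Y(x_j)^\top$ satisfies $\|A_j\|_{\mathrm{op}}\le Z(d+1,n)/m\asymp n^d/m$, and a similar computation controls the matrix variance proxy.

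Next, I would invoke a matrix Chernoff (or Bernstein) inequality for sums of i.i.d.\ bounded positive semidefinite random matrices, which yields a bound of the form
\begin{equation*}
\mathbb{P}\bigl(\|M-I\|_{\mathrm{op}}>\eta\bigr)\le 2\,Z(d+1,n)\exp\!\left(-\frac{c\,m\,\eta^2}{Z(d+1,n)}\right).
\end{equation*}
Since $Z(d+1,n)\asymp n^d$, taking $m\ge\bar{c}\,n^d\log n/\eta^2$ with $\bar{c}=\bar{c}(\gamma)$ large enough forces the right-hand side to be at most $\bar{c}\,n^{-\gamma}$, giving the conclusion. The main obstacle is selecting the right matrix concentration inequality so that the dimension factor enters only logarithmically and the variance parameter matches the operator norm bound (so that one does not lose extra factors of $n^d$); apart from this, the argument is a fairly standard covering-free route once the problem has been lifted to the Gram matrix $M$. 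An alternative classical route, which is the one taken in \cite{MR2475947}, replaces matrix concentration by a scalar Hoeffding bound combined with a covering argument over $\Sd$ and a Nikolskii-type inverse inequality for $\Pn(\Sd)$; this produces the same $n^d\log n/\eta^2$ sample complexity.
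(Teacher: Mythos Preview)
The paper does not supply its own proof of this lemma; it is stated purely as a citation from Le~Gia and Mhaskar \cite{MR2475947}, so there is no in-paper argument to compare against.

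Your matrix-concentration route is correct and would furnish a complete proof. The reduction of \eqref{equ:etaassumption} to the operator-norm bound $\|M-I\|_{\mathrm{op}}\le\eta$ is exact, the addition theorem gives the uniform summand bound $\|A_j\|_{\mathrm{op}}=Z(d+1,n)/m$, and a matrix Chernoff (e.g.\ Tropp's) then yields the stated $n^d\log n/\eta^2$ threshold, with the ambient dimension $Z(d+1,n)\asymp n^d$ entering only logarithmically --- precisely the issue you flag as the main obstacle, and one that the standard matrix Chernoff bound handles without further work. One cosmetic point: with the equal-weight rule \eqref{equ:equalweightquad} the weights are $w_j=1/m$, whereas your $M$ carries a prefactor $\lvert\Sd\rvert/m$; this reflects a normalization ambiguity already present in the paper (the rule \eqref{equ:equalweightquad} does not integrate constants to $\lvert\Sd\rvert$), not an error on your part, but you should say explicitly which normalization of $\omega_d$ you adopt.

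As you correctly observe, the original argument in \cite{MR2475947} proceeds instead via a scalar Hoeffding bound together with a covering of $\Sd$ and a Nikolskii-type inequality for $\Pn(\Sd)$. Your approach is more direct and dispenses with the covering-number bookkeeping; the classical route is more elementary in that it avoids matrix-valued tail bounds. Both give the same sample complexity, and either would be acceptable here.
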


\begin{corollary}\label{cor:unfettered}
Adopt conditions of Theorem \ref{thm} and Lemma \ref{lem:legiamhaskar}, where the quadrature rule for constructing $\U_nf$ takes the form of \eqref{equ:equalweightquad} and uses $m\geq \bar{c}(\gamma) n ^d\log{n}/\eta^2$ quadrature points. Then the stability result \eqref{equ:stability} and error bound \eqref{equ:error} are valid with probability exceeding $1-\bar{c}n^{-\gamma}$.
\end{corollary}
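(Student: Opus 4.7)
The plan is to observe that this corollary is a direct synthesis of Theorem \ref{thm} with Lemma \ref{lem:legiamhaskar}: the conclusions \eqref{equ:stability} and \eqref{equ:error} are deterministic consequences of the single analytic hypothesis \eqref{equ:etaassumption} (together with positivity of the weights), and the only randomness in the setup of the corollary is carried by the sampling of the nodes. Consequently, the probability bound transfers verbatim from the event on which \eqref{equ:etaassumption} holds to the event on which \eqref{equ:stability} and \eqref{equ:error} hold.

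Concretely, I would proceed as follows. Fix $\gamma>0$ and $\eta\in(0,1)$, let $\bar{c}=\bar{c}(\gamma)$ be the constant from Lemma \ref{lem:legiamhaskar}, and take $m\geq \bar{c}n^d\log n/\eta^2$ nodes drawn independently from $\omega_d$, assembled into the equal-weight rule \eqref{equ:equalweightquad}. Introduce the event
\[
A_n := \left\{\text{\eqref{equ:etaassumption} holds with the prescribed } \eta \right\}.
\]
By Lemma \ref{lem:legiamhaskar}, $\mathbb{P}(A_n)\geq 1-\bar{c}n^{-\gamma}$. On $A_n$, the rule \eqref{equ:equalweightquad} satisfies every hypothesis of Theorem \ref{thm}: the weights are positive by construction, and the Marcinkiewicz--Zygmund property with the chosen $\eta$ holds by the definition of $A_n$. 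Hence Theorem \ref{thm} applies pathwise on $A_n$, so both \eqref{equ:stability} and \eqref{equ:error} hold on $A_n$, which has the claimed probability.

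There is no substantive technical obstacle here; the exercise is essentially a clean restatement once the two ingredients are in place. The only matter worth flagging is constant-tracking: the constant $\bar{c}$ in the sample-size requirement and in the probability estimate is the same one delivered by Lemma \ref{lem:legiamhaskar}, while the constants appearing inside \eqref{equ:stability} and \eqref{equ:error} are exactly those from Theorem \ref{thm}, specialised to the weights $w_j$ of the equal-weight rule \eqref{equ:equalweightquad} (in particular $\sum_{j=1}^m w_j$ and $\lvert\mathbb{S}^d\rvert^{1/2}$ appear with the same form as in Theorem \ref{thm}, with no further simplification unless one additionally assumes the constants are integrated exactly).
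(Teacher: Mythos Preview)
Your proposal is correct and matches the paper's approach: the paper gives no separate proof of this corollary, treating it as an immediate consequence of combining Lemma \ref{lem:legiamhaskar} (which yields the Marcinkiewicz--Zygmund property \eqref{equ:etaassumption} with the stated probability) with Theorem \ref{thm} (whose conclusions follow deterministically once \eqref{equ:etaassumption} is in force). Your event-based write-up makes this explicit and is exactly the intended argument.
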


As we can see, having bypassed the quadrature exactness assumption of the original hyperinterpolation, Theorem \ref{thm}
 provides a general framework of analyzing the behavior of the unfettered hyperinterpolation. What we need to do in practice is to control the constant $\eta$ occurred in the Marcinkiewicz--Zygmund property \eqref{equ:etaassumption}. As a practical guide, if the quadrature points are independently random samples from the the distribution $\omega_d$, then Corollary \ref{cor:unfettered} suggests a simple way to decrease $\eta$ by increasing the number $m$ of quadrature points.

\section{Unfettered hyperinterpolation with QMC designs}\label{sec:QMCtheory}
If $\{x_j\}_{j=1}^m$ is a QMC design for $\Hs(\Sd)$, it can be managed to satisfy the Marcinkiewicz--Zygmund property \eqref{equ:etaassumption}, as shown in Section \ref{subsec:QMCinferior}. Hence the unfettered hyperinterpolation using QMC designs is a special case of the general framework analyzed in Theorem \ref{thm}. Recall that we refer to such approximation as the QMC hyperinterpolation, denoted by $\mathcal{Q}_n$. However, the obtained error estimate may not be optimal due to the generality
of Theorem \ref{thm}, and we can find a sharper estimate customized for the unfettered hyperinterpolation using QMC designs.

\subsection{QMC hyperinterpolation in the general framework of unfettered hyperinterpolation}\label{subsec:QMCinferior}
It is critical to note that the numerical integration error \eqref{equ:QMCerror} of the QMC design-based quadrature rule and the Marcinkiewicz--Zygmund property \eqref{equ:etaassumption} cannot imply each other. On the one hand, the error \eqref{equ:QMCerror} applies to all functions in $\Hs(\Sd)$ with the property \eqref{equ:etaassumption} only holds for polynomial $\chi^2$ with $\chi\in\Pn(\Sd)$. On the other hand, if the integrand in the quadrature rule \eqref{equ:equalweightquad} is $\chi^2$ with $\chi\in\Pn(\Sd)$, the error bound \eqref{equ:QMCerror} suggests
\begin{equation}\label{equ:QMCchi2}
\left\lvert\frac{1}{m}\sum_{j=1}^m\chi(x_j)^2-\int_{\Sd}\chi^2\text{d}\omega_d\right\rvert\leq \frac{c(s,d)}{m^{s/d}}\|\chi^2\|_{\Hs}.
\end{equation}
This error \eqref{equ:QMCchi2} is not compatible with the Marcinkiewicz--Zygmund property \eqref{equ:etaassumption} because the controlling term is $\|\chi^2\|_{\Hs}$ instead of $\int_{\Sd}\chi^2\text{d}\omega_d$. Nevertheless, we can find an upper bound of $\|\chi^2\|_{\Hs}$ in terms of $\int_{\Sd}\chi^2\text{d}\omega_d$ to transform the error \eqref{equ:QMCchi2} into a Marcinkiewicz--Zygmund property \eqref{equ:etaassumption}. With the aid of Lemma \ref{lem:hsl2}, we have
\begin{equation*}\begin{split}
\|\chi^2\|_{\Hs}
&\leq \tilde{c}(2n+1)^s\|\chi^2\|_{\Lt}\leq \tilde{c}(2n+1)^s\|\chi\|_{\infty}\|\chi\|_{\Lt} \leq \tilde{c}(2n+1)^s\frac{\|\chi\|_{\infty}}{\|\chi\|_{\Lt}}\int_{\Sd}\chi^2\text{d}\omega_d.
\end{split}\end{equation*}
For any $\chi = \sum_{\ell=0}^n\sum_{k=1}^{Z(d,\ell)}\alpha_{\ell,k}Y_{\ell,k}\in\Pn(\Sd)$, we have 
\begin{equation*}
\frac{\|\chi\|_{\infty}}{\|\chi\|_{\Lt}}\leq \frac{\sum_{\ell=0}^n\sum_{k=1}^{Z(d,\ell)}\lvert\alpha_{\ell,k}\rvert\|Y_{\ell,k}\|_{\infty}}{\sqrt{\sum_{\ell=0}^n\sum_{k=1}^{Z(d,\ell)}\lvert\alpha_{\ell,k}\rvert^2}} \leq \sqrt{\frac{Z(d,n)}{\lvert\Sd\rvert}Z(d+1,n)},
\end{equation*}
where we used the estimate \eqref{equ:sphericalharmincsbound} on the uniform norm of $Y_{\ell,k}$ and regard $\{\alpha_{\ell,k}\}$ as a vector of size $Z(d+1,n)$. Then we can let 
\begin{equation}\label{equ:order1}
\eta = \frac{c(s,d)\tilde{c}}{m^{s/d}}(2n+1)^s\sqrt{\frac{Z(d,n)}{\lvert\Sd\rvert}Z(d+1,n)}
\end{equation}
and enforce it to be in $(0,1)$. Thus in this case, with the asymptotic result \eqref{equ:numberZ} of the size of $Z(d,\ell)$, the number $m$ should have a lower bound of order $n^{d+\frac{d^2}{s}-\frac{d}{2s}}$ as $n\rightarrow\infty$.
Moreover, regarding the term $\sqrt{\eta^2+4\eta}\|\chi^*\|_{\Lt}$ in the error estimate \eqref{equ:error} in Theorem \ref{thm}, for a fixed degree, the convergence rate of this term with respect to $m$ is $m^{-s/(2d)}$.

\subsection{Approximation theory of QMC hyperinterpolation}\label{subsec:QMC}
We then show that the QMC hyperinterpolation has a sharper error estimate than the general estimate \eqref{equ:error} in Theorem \ref{thm}.
\begin{theorem}\label{QMCthm}
Given $f\in \Hs(\Sd)\subset \Lt(\Sd)$, let $\mathcal{Q}_nf\in\mathbb{P}_n$ be its QMC hyperinterpolant defined by \eqref{equ:QMChyper}, where the $m$-point equal-weight quadrature rule \eqref{equ:equalweightquad} adopts a QMC design for $\Hs(\Sd)$ as quadrature points. Then
\begin{equation}\label{equ:stabilityqmc}
\|\mathcal{Q}_nf\|_{\Lt}\leq \|f\|_{\Lt}+\frac{c'(s,d)}{m^{s/d}}(n+1)^s\|f\|_{\Hs},
\end{equation}
where $c'(s,d)>0$ is some constant depending only on $s$ and $d$, and
\begin{equation}\label{equ:errorqmc}
\|\mathcal{Q}_nf-f\|_{\Lt}\leq c''(s,d)\left(n^{-s} +\frac{1}{m^{s/d}}\sqrt{\frac{Z(d+1,n)}{a_n^{(s)}}}\right) \|f\|_{\Hs},
\end{equation}
where $c''(s,d)>0$ is some constant depending only on $s$ and $d$. 
\end{theorem}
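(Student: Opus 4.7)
The plan is to telescope the error through the truncated orthogonal projection by writing
\begin{equation*}
\mathcal{Q}_nf - f = (\mathcal{Q}_nf - \mathcal{P}_nf) + (\mathcal{P}_nf - f),
\end{equation*}
and bounding the two pieces separately by different tools. The tail $\|\mathcal{P}_nf - f\|_{\Lt}$ is classical: by Parseval applied to the Laplace--Fourier expansion beyond degree $n$, together with the monotonicity of $(a^{(s)}_\ell)$, we get $\|\mathcal{P}_nf - f\|_{\Lt}^2 \leq a_{n+1}^{(s)}\|f\|_{\Hs}^2$, which is of order $n^{-2s}\|f\|_{\Hs}^2$ by \eqref{equ:asl}, producing the $n^{-s}$ term in \eqref{equ:errorqmc}.

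For the discretization piece $\|\mathcal{Q}_nf - \mathcal{P}_nf\|_{\Lt}$, observe that this polynomial in $\Pn(\Sd)$ has Fourier coefficients exactly the aliasing errors $\langle f,Y_{\ell,k}\rangle_m - \hat{f}_{\ell,k}$, each of which is the QMC integration error for the integrand $fY_{\ell,k}$. Since $s > d/2$, Lemma \ref{lem:sobolevclosed} gives $\|fY_{\ell,k}\|_{\Hs} \leq \check{c}\|f\|_{\Hs}\|Y_{\ell,k}\|_{\Hs}$, and direct inspection of the Sobolev norm applied to a single spherical harmonic yields $\|Y_{\ell,k}\|_{\Hs} = 1/\sqrt{a_\ell^{(s)}}$. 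Combining these with \eqref{equ:QMCerror}, and then squaring and summing via Parseval, produces
\begin{equation*}
\|\mathcal{Q}_nf - \mathcal{P}_nf\|_{\Lt}^2 \leq \frac{c(s,d)^2\check{c}^2}{m^{2s/d}}\|f\|_{\Hs}^2\sum_{\ell=0}^n\frac{Z(d,\ell)}{a_\ell^{(s)}},
\end{equation*}
which is at most a constant multiple of $\frac{1}{m^{2s/d}}\cdot\frac{Z(d+1,n)}{a_n^{(s)}}\|f\|_{\Hs}^2$ after estimating $1/a_\ell^{(s)} \leq 1/a_n^{(s)}$ for $\ell \leq n$ and using $\sum_{\ell\leq n}Z(d,\ell) = Z(d+1,n)$. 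Together with the tail estimate, this yields \eqref{equ:errorqmc}.

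For the stability bound \eqref{equ:stabilityqmc}, I would reuse the same splitting with $\|\mathcal{P}_nf\|_{\Lt}\leq \|f\|_{\Lt}$ but bound the discretization term by a duality argument instead of Parseval, so as to exploit Lemma \ref{lem:hsl2}. Since $\mathcal{Q}_nf - \mathcal{P}_nf \in \Pn(\Sd)$, I can write
\begin{equation*}
\|\mathcal{Q}_nf - \mathcal{P}_nf\|_{\Lt} = \sup_{\chi \in \Pn(\Sd),\,\|\chi\|_{\Lt}\leq 1}\lvert\langle f,\chi\rangle_m - \langle f,\chi\rangle\rvert,
\end{equation*}
using that $\chi \in \Pn(\Sd)$ has no Fourier content above degree $n$. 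Then applying \eqref{equ:QMCerror} to $f\chi$, the Banach algebra inequality $\|f\chi\|_{\Hs} \leq \check{c}\|f\|_{\Hs}\|\chi\|_{\Hs}$, and finally Lemma \ref{lem:hsl2} to replace $\|\chi\|_{\Hs}$ by $\tilde{c}(n+1)^s\|\chi\|_{\Lt}$, the supremum is bounded by $\frac{c'(s,d)}{m^{s/d}}(n+1)^s\|f\|_{\Hs}$, closing the stability argument.

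The main obstacle is justifying the application of the QMC design bound \eqref{equ:QMCerror} to products such as $fY_{\ell,k}$ and $f\chi$, which are not a priori in $\Hs(\Sd)$; this is precisely where the assumption $s > d/2$ is essential, through the Banach algebra property of Lemma \ref{lem:sobolevclosed}. This also explains why the theorem is proved from scratch rather than by invoking Theorem \ref{thm}: the Marcinkiewicz--Zygmund constant $\eta$ that QMC designs force through \eqref{equ:order1} is lossy because it ignores the richer Sobolev structure available in this setting, whereas the argument above couples QMC directly with the Sobolev regularity of $f$.
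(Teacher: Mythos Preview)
Your argument for the error estimate \eqref{equ:errorqmc} is essentially identical to the paper's: both telescope through $\mathcal{P}_nf$, bound the tail $\|\mathcal{P}_nf-f\|_{\Lt}^2$ by (a constant times) $a_n^{(s)}\|f\|_{\Hs}^2$, and control the discretization piece coefficient-by-coefficient by applying the QMC bound \eqref{equ:QMCerror} to each $fY_{\ell,k}$, then the Banach algebra inequality of Lemma \ref{lem:sobolevclosed}, and finally $\|Y_{\ell,k}\|_{\Hs}^2=1/a_\ell^{(s)}$.

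For the stability bound \eqref{equ:stabilityqmc} your route genuinely differs from the paper's. The paper does \emph{not} split through $\mathcal{P}_nf$; instead it uses the reproducing-kernel identity $\|\mathcal{Q}_nf\|_{\Lt}^2=\sum_j w_j f(x_j)\mathcal{Q}_nf(x_j)$, applies the QMC bound directly to the product $(\mathcal{Q}_nf)f$, and then invokes Lemmas \ref{lem:hsl2} and \ref{lem:sobolevclosed} to estimate $\|(\mathcal{Q}_nf)f\|_{\Hs}\leq \check{c}\tilde{c}(n+1)^s\|f\|_{\Hs}\|\mathcal{Q}_nf\|_{\Lt}$, after which dividing by $\|\mathcal{Q}_nf\|_{\Lt}$ gives \eqref{equ:stabilityqmc}. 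Your duality argument is a clean alternative: it reuses the same $\mathcal{P}_nf$ splitting already needed for the error estimate, and avoids the self-referential step of bounding $\|\mathcal{Q}_nf\|_{\Hs}$ in terms of $\|\mathcal{Q}_nf\|_{\Lt}$. Both arguments rest on exactly the same three ingredients (the QMC error \eqref{equ:QMCerror}, Lemma \ref{lem:sobolevclosed}, and Lemma \ref{lem:hsl2}), so neither buys more than the other in terms of constants or generality.

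One small caveat: you invoke ``monotonicity of $(a^{(s)}_\ell)$'', which is not assumed in \eqref{equ:asl}; only the equivalence $a_\ell^{(s)}\asymp(1+\ell)^{-2s}$ is. The bound $\|\mathcal{P}_nf-f\|_{\Lt}^2\lesssim n^{-2s}\|f\|_{\Hs}^2$ still follows from that equivalence alone, so this is purely cosmetic.
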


\begin{proof}
For $f\in \Hs(\Sd)$, we have
\begin{equation*}\begin{split}
\|\mathcal{Q}_nf\|_{\Lt}^2&=\left\langle \mathcal{Q}_nf,\mathcal{Q}_nf\right\rangle
 = \left\langle \sum_{j=1}^mw_jf(x_j)G_n(x,x_j),\mathcal{Q}_nf(x)\right\rangle = \sum_{j=1}^mw_jf(x_j)\mathcal{Q}_nf(x_j)\\
& \leq \int_{\Sd}(\mathcal{Q}_n f)f\text{d}\omega_d + \frac{c(s,d)}{m^{s/d}}\|(\mathcal{Q}_nf)f\|_{\Hs} \leq \|f\|_{\Lt}\|\mathcal{Q}_n f\|_{\Lt} + \frac{c(s,d)\check{c}}{m^{s/d}}\|f\|_{\Hs}\|\mathcal{Q}_nf\|_{\Hs}\\
& \leq \|f\|_{\Lt}\|\mathcal{Q}_n f\|_{\Lt} + \frac{c(s,d)\check{c}}{m^{s/d}}\|f\|_{\Hs}(n+1)^s\|\mathcal{Q}_nf\|_{\Lt},
\end{split}\end{equation*}
where the first inequality is due to the integration error \eqref{equ:QMCerror} using QMC designs, the second one is due to the Cauchy--Schwarz inequality and Lemma \ref{lem:sobolevclosed} with $\check{c}$ given there, and the last one is due to Lemma \ref{lem:hsl2}. Hence we have the stability result \eqref{equ:stabilityqmc}.

For the error estimate \eqref{equ:errorqmc}, we have 
\begin{equation*}
\|\mathcal{Q}_nf - f\|_{\Lt} \leq \|\mathcal{Q}_nf - \mathcal{P}_n f\|_{\Lt} + \|\mathcal{P}_nf-f\|_{\Lt}, 
\end{equation*}
where $\mathcal{P}_n$ is the $\Lt$-orthgonal projection operator \eqref{equ:proj}. For the term $ \|\mathcal{P}_nf-f\|_{\Lt}$, we have 
\begin{equation*}\begin{split}
\|\mathcal{P}_nf-f\|_{\Lt}^2
& = \sum_{\ell=n+1}^{\infty}\sum_{k=1}^{Z(d,\ell)}\lvert\langle f,Y_{\ell,k}\rangle\rvert^2 = \sum_{\ell=n+1}^{\infty}\sum_{k=1}^{Z(d,\ell)}\lvert\langle f,Y_{\ell,k}\rangle\rvert^2\frac{a_{\ell}^{(s)}}{a_{\ell}^{(s)}} \leq a_n^{(s)} \|f\|_{\Hs}^2 \lesssim n^{-2s}\|f\|_{\Hs}^2.
\end{split}\end{equation*}
For the term $\|\mathcal{Q}_nf - \mathcal{P}_n f\|_{\Lt}$, we have
\begin{equation*}
\|\mathcal{Q}_nf - \mathcal{P}_n f\|_{\Lt}^2 = \sum_{\ell=0}^n\sum_{k=1}^{Z(d,\ell)}\left\lvert\left\langle f,Y_{\ell,k}\right\rangle_m - \left\langle f,Y_{\ell,k}\right\rangle \right\rvert^2
\end{equation*}
and
\begin{equation*}
\left\lvert\left\langle f,Y_{\ell,k}\right\rangle_m - \left\langle f,Y_{\ell,k}\right\rangle \right\rvert^2\leq \left(\frac{c(s,d)}{m^{s/d}}\|fY_{\ell,k}\|_{\Hs}\right)^2\leq \left(\frac{c(s,d)\check{c}}{m^{s/d}}\|f\|_{\Hs}\|Y_{\ell,k}\|_{\Hs}\right)^2,
\end{equation*}
where the first inequality is described by the integration error \eqref{equ:QMCerror} using QMC designs, and the second is due to Lemma \ref{lem:sobolevclosed}. Note that 
\begin{equation*}
\|Y_{\ell,k}\|_{\Hs}^2 = \sum_{\ell'=0}^n\sum_{k'=1}^{Z(d,\ell)}\frac{1}{a_{\ell}^{(s)}}\lvert\left\langle Y_{\ell,k},Y_{\ell',k'}\right\rangle\rvert^2 = \frac{1}{a_{\ell}^{(s)}}.
\end{equation*}
Thus
\begin{equation*}
\|\mathcal{Q}_nf - \mathcal{P}_n f\|_{\Lt}^2 \leq \left(\frac{c(s,d)\check{c}}{m^{s/d}}\|f\|_{\Hs}\right)^2 \frac{1}{a_n^{(s)}}\sum_{\ell=0}^n\sum_{k=1}^{Z(d,\ell)}1 =\left(\frac{c(s,d)\check{c}}{m^{s/d}}\|f\|_{\Hs}\right)^2 \frac{Z(d+1,n)}{a_n^{(s)}},
\end{equation*}
leading to the error estimate \eqref{equ:errorqmc}.
\end{proof}

The estimate \eqref{equ:errorqmc} consists of two terms, one representing the error of the original hyperinterpolation, and the other is newly introduced in terms of $m$. In addition to hyperinterpolation, the fully discrete needlet approximation \cite{MR3668040} using spherical needlets \cite{MR2253732,MR2237162} and using quadrature rules without exactness assumption also has error estimates of this type, see a recent contribution \cite{brauchart2022needlets}.

\begin{corollary}\label{cor:QMCpoly}
If $f\in \Hs(\Sd)\cap \Pn(\Sd)$, then $ \|\mathcal{P}_nf-f\|_{\Lt} = 0$ and
\begin{equation*}
\|\mathcal{Q}_nf-f\|_{\Lt}\leq \frac{c(s,d)\check{c}}{m^{s/d}}\sqrt{\frac{Z(d+1,n)}{a_n^{(s)}}} \|f\|_{\Hs}.
\end{equation*}
\end{corollary}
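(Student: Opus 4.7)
The plan is to observe that Corollary \ref{cor:QMCpoly} follows immediately by specializing the estimate in the proof of Theorem \ref{QMCthm} to the case $f \in \Pn(\Sd)$. So no new machinery is required; the work is simply to isolate which terms vanish.

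First I would handle the claim $\|\mathcal{P}_nf - f\|_{\Lt} = 0$. Since $\mathcal{P}_n$ is the $\Lt$-orthogonal projection onto $\Pn(\Sd)$ (see \eqref{equ:proj}), any $f \in \Pn(\Sd)$ satisfies $\mathcal{P}_nf = f$. Equivalently, in the Laplace--Fourier expansion of $f$, all coefficients of degree strictly greater than $n$ vanish, and so the tail series computed in the proof of Theorem \ref{QMCthm} is empty.

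Next I would invoke the triangle inequality
\[
\|\mathcal{Q}_nf - f\|_{\Lt} \leq \|\mathcal{Q}_nf - \mathcal{P}_nf\|_{\Lt} + \|\mathcal{P}_nf - f\|_{\Lt},
\]
already used in the proof of Theorem \ref{QMCthm}, and drop the second term. For the first term, I would reuse verbatim the bound established in that proof, namely
\[
\|\mathcal{Q}_nf - \mathcal{P}_nf\|_{\Lt}^2 \leq \left(\frac{c(s,d)\check{c}}{m^{s/d}}\|f\|_{\Hs}\right)^2 \frac{Z(d+1,n)}{a_n^{(s)}},
\]
which was derived from the QMC integration error \eqref{equ:QMCerror}, the Banach-algebra property in Lemma \ref{lem:sobolevclosed}, and the identity $\|Y_{\ell,k}\|_{\Hs}^2 = 1/a_{\ell}^{(s)}$. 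Taking square roots yields the stated inequality.

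There is no real obstacle here; this is essentially a bookkeeping corollary. The only subtle point worth mentioning is that the bound on $\|\mathcal{Q}_nf - \mathcal{P}_nf\|_{\Lt}$ in Theorem \ref{QMCthm} did not require $f$ to lie in $\Pn(\Sd)$; it used only $f \in \Hs(\Sd)$, and the same chain of inequalities applies unchanged. Hence the only effect of the additional hypothesis $f \in \Pn(\Sd)$ is the elimination of the $\|\mathcal{P}_nf - f\|_{\Lt}$ term, which accounts for the missing $n^{-s}$ contribution in the final bound.
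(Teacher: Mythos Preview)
Your proposal is correct and matches the paper's approach: the corollary is stated immediately after Theorem \ref{QMCthm} without a separate proof, precisely because it is the direct specialization you describe. The only effect of $f\in\Pn(\Sd)$ is to kill the projection-error term, leaving the $\|\mathcal{Q}_nf-\mathcal{P}_nf\|_{\Lt}$ bound from the theorem's proof intact.
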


\begin{remark}\label{rem:asymp}
If the number $m$ of quadrature points has a lower bound of order $n^d$, then $\|\mathcal{Q}_n f\|_{\Lt}$ is uniformly bounded by some constant. Recall from \eqref{equ:asl} that $a_{n}^{(s)}\asymp (1+n)^{-2s}$ and from \eqref{equ:numberZ} that $\quad Z(d+1,n) \sim \frac{2}{\Gamma(d+1)}n^{d}$ as $n\rightarrow\infty$. Thus if $m$ has a lower bound of order $n^{d+\frac{d^2}{2s}}$, then $\|\mathcal{Q}_n f-f\|_{\Lt}$ is uniformly bounded by some constant as $n\rightarrow\infty$. Moreover, if $m$ has a lower bound of order 
\begin{equation}\label{equ:order2}
(n+1)^{d+\varepsilon_1}n^{\frac{d^2}{2s}+\varepsilon_2}
\end{equation}
where $\varepsilon_1,\varepsilon_2>0$, then $\|\mathcal{Q}_n f-f\|_{\Lt}\rightarrow 0$ as $n\rightarrow\infty$
\end{remark}

If the QMC hyperinterpolation is regarded as a special case of the unfettered hyperinterpolation, then the expression \eqref{equ:order1} on $\eta$ requires $m$ to have a lower bound of order 
\begin{equation}\label{equ:order3}
(2n+1)^{d+\varepsilon_1}n^{\frac{2d^2-d}{2s}+\varepsilon_2}
\end{equation}
so that $\eta\rightarrow 0$ and hence $\|\mathcal{Q}_n f-f\|_{\Lt}\rightarrow 0$ as $n\rightarrow\infty$. For the same values of $\varepsilon_1$ and $\varepsilon_2$, the order \eqref{equ:order3} derived from regarding the QMC hyperinterpolation as a special case of the unfettered hyperinterpolation is unconditionally greater than the order \eqref{equ:order2} derived from Theorem \ref{QMCthm}, as $\frac{d^2}{2s}<\frac{d^2}{s}-\frac{d}{2s}$ holds for any $d\geq 1$. Moreover, as the term $E_n(f)$ in the estimate \eqref{equ:error} in Theorem \ref{thm} also has convergence rate of $n^{-s}$, what essentially varies the general estimate \eqref{equ:error} and the refined estimate \eqref{equ:errorqmc} is the other term in both estimates: the term $\sqrt{\eta^2+4\eta}\|\chi^*\|_{\Lt}$ in the estimate \eqref{equ:error} and the term $\frac{1}{m^{s/d}}\sqrt{\frac{Z(d+1,n)}{a_n^{(s)}}} \|f\|_{\Hs}$ in the refined estimate \eqref{equ:errorqmc}. For a fixed degree $n$, we have demonstrated in Section \ref{subsec:QMCinferior} that the convergence rate of the term in \eqref{equ:error} with respect to $m$ is $m^{-s/(2d)}$, and we can see the convergence rate of the term in \eqref{equ:errorqmc} is $m^{-s/d}$.

\begin{corollary}\label{cor:QMChyper}
With the aid of Remark \ref{rem:asymp}, we know that if $E_n(f)\lesssim{n^{-s}}$, then letting $m\gtrsim (n+1)^dn^{\frac{d^2}{2s}}n^{d}$ gives
\begin{equation*}
\|\mathcal{Q}_nf - f\|_{\Lt}\lesssim {n^{-s}}.
\end{equation*}
\end{corollary}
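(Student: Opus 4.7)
The plan is to apply the error estimate \eqref{equ:errorqmc} from Theorem \ref{QMCthm} directly and verify, through bookkeeping of exponents using the asymptotics already recorded in the paper, that the prescribed lower bound on $m$ is precisely what is needed to force the second term in that estimate to decay at rate $n^{-s}$, matching the first term.

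Starting from
\begin{equation*}
\|\mathcal{Q}_nf-f\|_{\Lt}\leq c''(s,d)\left(n^{-s}+\frac{1}{m^{s/d}}\sqrt{\frac{Z(d+1,n)}{a_n^{(s)}}}\right)\|f\|_{\Hs},
\end{equation*}
the first term is already $O(n^{-s})$, since $\|f\|_{\Hs}$ is a fixed constant (the hypothesis $E_n(f)\lesssim n^{-s}$ reflects the Jackson-type smoothness regime used for this asymptotic discussion, as in Remark \ref{rem:asymp}). So the only work is to control the second term. Combining the growth order $Z(d+1,n)\sim \frac{2}{\Gamma(d+1)}n^d$ from \eqref{equ:numberZ} with $a_n^{(s)}\asymp(1+n)^{-2s}$ from \eqref{equ:asl}, I obtain
\begin{equation*}
\sqrt{\frac{Z(d+1,n)}{a_n^{(s)}}}\lesssim n^{d/2}(n+1)^{s}.
\end{equation*}
Then, substituting the hypothesis $m\gtrsim (n+1)^d\, n^{d^2/(2s)}\, n^d$ gives $m^{s/d}\gtrsim (n+1)^s\, n^{d/2}\, n^s$, so
\begin{equation*}
\frac{1}{m^{s/d}}\sqrt{\frac{Z(d+1,n)}{a_n^{(s)}}}\lesssim \frac{n^{d/2}(n+1)^s}{(n+1)^s\, n^{d/2}\, n^s}=n^{-s},
\end{equation*}
which, together with the bound on the first term, yields $\|\mathcal{Q}_nf-f\|_{\Lt}\lesssim n^{-s}$.

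Honestly, I do not expect any serious obstacle here: the corollary is a direct consequence of Theorem \ref{QMCthm} together with the asymptotic orders of $Z(d+1,n)$ and $a_n^{(s)}$. The only point where one must be slightly careful is reading the lower bound on $m$ as the exact threshold at which $m^{s/d}$ simultaneously absorbs the $n^{d/2}$ factor coming from $\sqrt{Z(d+1,n)}$ and the $n^s$ factor coming from $1/\sqrt{a_n^{(s)}}$, while still leaving an additional $n^{-s}$ on top of what Remark \ref{rem:asymp} already guarantees. This also explains the three-factor structure $(n+1)^d\cdot n^{d^2/(2s)}\cdot n^d$ of the hypothesis: the first two factors correspond to the threshold in Remark \ref{rem:asymp} that secures uniform boundedness of $\|\mathcal{Q}_nf-f\|_{\Lt}$, and the extra $n^d$ is precisely the gain needed to upgrade boundedness into an $n^{-s}$ convergence rate.
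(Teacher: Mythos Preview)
Your proposal is correct and follows exactly the approach the paper intends: the corollary is stated without proof, merely invoking Remark~\ref{rem:asymp}, and your argument supplies precisely the exponent bookkeeping that remark summarizes. Your observation that the hypothesis $E_n(f)\lesssim n^{-s}$ is not actually used in the derivation from Theorem~\ref{QMCthm} (since the estimate \eqref{equ:errorqmc} already contains $n^{-s}$ rather than $E_n(f)$) is accurate and worth noting.
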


\begin{remark}
For the above results, we assume $f\in \Hs(\Sd)$ and $\{x_j\}_{j=1}^m$ is a QMC design for $\Hs(\Sd)$. Recall the concept of QMC strength. Suppose $f\in H^{s'}$ and $\{x_j\}_{j=1}^m$ is a QMC design with strength $s^*$, then $s$ in the above results should be $s = \min\{s',s^*\}$. 
\end{remark}

\section{Numerical experiments}\label{sec:numerical}

\subsection{Point sets and test functions}
Many different sequences of point sets on the sphere have been introduced in the literature. In the following experiments, we use points sets including
\begin{itemize}
       \item[$\circ$] Random scattered points generated by the following MATLAB commands:\\
       \texttt{rvals = 2*rand(m,1)-1;}\\
       \texttt{elevation = asin(rvals); }\texttt{\% calculate an elevation angle for each point}\\
       \texttt{azimuth = 2*pi*rand(m,1); }\texttt{\% create an azimuth angle for each point}\\
       \texttt{\% convert to Cartesian coordinates}\\
       \texttt{[x1,x2,x3] = sph2cart(azimuth,elevation,ones(m,1));}
       \item[$\circ$] Equal area points \cite{MR1306011} based on an algorithm given in \cite{MR2582801};
       \item[$\circ$] Fekete points which maximize the determinant for polynomial interpolation \cite{MR2065291};
       \item[$\circ$] Coulomb energy points, which minimize $\sum_{i,j=1}^m(1/\|x_i-x_j\|_2)$;
       \item[$\circ$] Spherical $t$-designs.
\end{itemize}
Random scattered points are directly generated in MATLAB, equal area points are generated based on the Recursive Zonal Equal Area (EQ) Sphere Partitioning Toolbox by Leopardi, Fekete points and Coulomb energy points are computed by Womersley in advance and are available on his website\footnote{Robert Womersley, \emph{Interpolation and Cubature on the Sphere}, \url{http://
www.maths.unsw.edu.au/~rsw/Sphere/}; accessed in August, 2022.}, and spherical $t$-designs are generated as the so-called well conditioned spherical $t$-designs in \cite{MR2763659}.

Moreover, we consider four kinds of test functions, including 
\begin{itemize}
       \item[$\circ$] A polynomial $f_1(x)=(x_1+x_2+x_3)^2\in\mathbb{P}_6(\Sd)$;
       \item[$\circ$] $f_2(x_1,x_2,x_3): = \lvert x_1+x_2+x_3\rvert+ \sin^2(1+\lvert x_1+x_2+x_3 \rvert)$, which is continuous but non-smooth;
       \item[$\circ$] The Franke function for the sphere \cite[p. 146]{MR946761}
       \begin{equation*}\begin{split}
       f_3(x_1,x_2,x_3) :=& 0.75\exp(-((9x_1-2)^2)/4-((9x_2-2)^2)/4-((9x_3-2)^2)/4)\\
                                        & +0.75\exp(-((9x_1+1)^2)/49-((9x_2+1))/10-((9x_3+1))/10)\\
                                        & +0.5\exp(-((9x_1-7)^2)/4-((9x_2-3)^2)/4-((9x_3-5)^2)/4)\\
                                        & -0.2\exp(-((9x_1-4)^2)-((9x_2-7)^2)-((9x_3-5)^2)),
       \end{split}\end{equation*}
       which is in $C^{\infty}(\Sd)$;
       \item[$\circ$] The sums of six compactly supported Wendland radial basis function \cite{MR3668040}
       \begin{equation*}
              f_{4,\sigma}:=\sum_{i=1}^6\phi_{\sigma}(z_i-x),\quad \sigma\geq 0,
       \end{equation*}
       where $z_1=[1,0,0]^{\text{T}}$, $z_2 = [-1,0,0]^{\text{T}}$, $z_3 = [0,1,0]^{\text{T}}$, $z_4=[0,-1,0]^{\text{T}}$, $z_5 = [0,0,1]^{\text{T}}$, and $z_6=[0,0,-1]^{\text{T}}$. The original Wendland functions 
       \begin{equation*}
              \tilde{\phi}_{\sigma}(r):=
              \begin{cases}
              (1-r)_+^2,& \sigma =0,\\
              (1-r)^4_+(4r+1), & \sigma = 1,\\
              (1-r)_+^6(35r^2+18r+3)/3,& \sigma=2,\\
              (1-r)_+^8(32r^3+25r^2+8r+1),&\sigma=3,\\
              (1-r)_+^{10}(429r^4+450r^3+210r^2+50r+5)/5,&\sigma=4,
              \end{cases}
       \end{equation*}
       are defined in \cite{wendland1995piecewise}, where $(r)_+:=\max\{r,0\}$ for $r\in\mathbb{R}$, and the normalized Wendland functions (test functions below) as defined in \cite{MR3822234} are
       \begin{equation*}
       \phi_{\sigma}(r) : = \tilde{\phi}_{\sigma}\left(\frac{r}{\delta_{\sigma}}\right),\quad \delta_{\sigma}:=\frac{3(\sigma+1)\Gamma(\sigma+1/2)}{2\Gamma(\sigma+1)},\quad \sigma \geq 0.
       \end{equation*}
       The normalized Wendland functions converge pointwise to a Gaussian as $\sigma\rightarrow\infty$, see \cite{chernih2014wendland}; moreover, $f_{4,\sigma}\in H^{\sigma+3/2}(\Sd)$, see \cite{MR2740542,MR1920637}.
\end{itemize}

\subsection{Unfettered hyperinterpolation and scattered data}
We start with a very interesting example of the unfettered hyperinterpolation with scattered data. As we have discussed in Theorem \ref{thm} and Corollary \ref{cor:unfettered}, the performance (i.e., the $\Lt$ error) of the unfettered hyperinterpolation is heavily dependent on the constant $\eta$, and what we need to do is to control this constant. In particular, if the degree $n$ and the number $m$ of quadrature points are fixed, Corollary \ref{cor:unfettered} suggests that $\eta$ has a lower bound of order $\sqrt{n^2\log{n}/m}$. It is immediate to see that $\eta$ is positively correlated to $n$ and negatively to $m$. Moreover, the term $\sqrt{\eta^2+4\eta}\|\chi^*\|_{\Lt}$ in the error bound \eqref{equ:error} has a lower bound of order 
$$\sqrt{\frac{n^2\log{n}}{m}+4\sqrt{\frac{n^2\log{n}}{m}}}.$$
That is, for a given $n$, the term $\sqrt{\eta^2+4\eta}\|\chi^*\|_{\Lt}$ has a lower bound of order $m^{-1/4}$.

We first solely investigate the term $\sqrt{\eta^2+4\eta}\|\chi^*\|_{\Lt}$ that arises as an artifact when the quadrature exactness assumption is discarded and leads to the divergence of the unfettered hyperinterpolation by examining the test function $f_1\in\mathbb{P}_6(\Sd)$. As $E_n(f_1)=0$ for all $n\geq 6$, we can focus on this term $\sqrt{\eta^2+4\eta}\|\chi^*\|_{\Lt}$ by letting $n\geq 6$. The $\Lt$ errors are depicted in Figure \ref{fig:unfettered1}: For each pair of $(n,m)$, we test ten times and report the average in terms of solid lines with markers; the maximal and minimal errors among these ten tests contribute to the upper and lower bounds of the filled region. We have at least three observations. Firstly, a larger degree $n$ of the unfettered hyperinterpolation, counterintuitively but rigorously asserted by our theory, leads to a larger value of $\sqrt{\eta^2+4\eta}\|\chi^*\|_{\Lt}$, because Corollary \ref{cor:unfettered} suggests that $\eta$ is negatively related to $n$. Secondly, as $n$ increases, the unfettered hyperinterpolation becomes more stable in the sense that the gap between the maximal and minimal errors among the ten tests for each pair of $(n,m)$ shrinks. This is also asserted by Corollary \ref{cor:unfettered} that the error bound \eqref{equ:error} is valid with probability exceeding $1-\bar{c}n^{-\gamma}$. Thirdly, as $m$ increases, the decaying rate of the unfettered hyperinterpolation with respect to $m$ for each $n$ coincides with the rate of $m^{-1/4}$. This observation is partially covered by our theory that the term $\sqrt{\eta^2+4\eta}\|\chi^*\|_{\Lt}$ has a lower bound of order $m^{-1/4}$, see discussions in the previous paragraph, and we conjecture that there may hold $\sqrt{\eta^2+4\eta}\|\chi^*\|_{\Lt}\asymp m^{-1/4}$.
\begin{figure}[htbp]
  \centering
  \includegraphics[width=10cm]{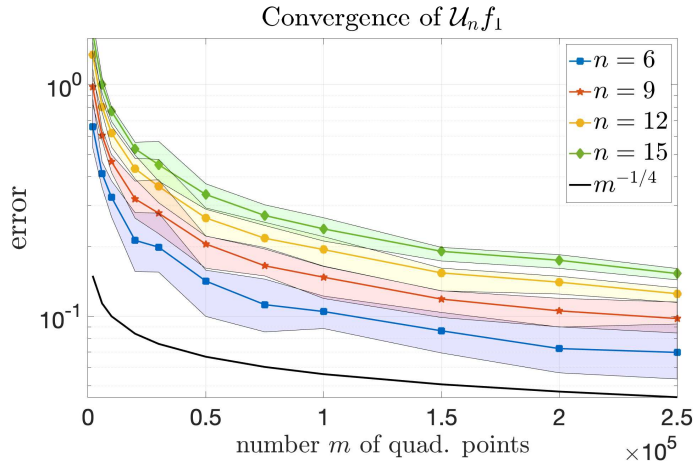}
  \caption{Convergence of the unfettered hyperinterpolation in the approximation of $f_1$.}\label{fig:unfettered1}
\end{figure}

After characterizing the behavior of the term $\sqrt{\eta^2+4\eta}\|\chi^*\|_{\Lt}$, we then consider the $\Lt$ error of the unfettered hyperinterpolation. If $E_n(f)$ is not zero, then error estimate \eqref{equ:error} is controlled by two terms, $E_n(f)$ and  $\sqrt{\eta^2+4\eta}\|\chi^*\|_{\Lt}$.  We repeat the above procedure for non-polynomial functions $f_2$ and $f_3$, and the $\Lt$ errors are displayed in Figure \ref{fig:unfettered2}, in which we only report the average errors. We see that when $m$ is relatively small, the term $\sqrt{\eta^2+4\eta}\|\chi^*\|_{\Lt}$ dominates the error bound, so a smaller $n$ leads to a smaller $\eta$ and hence a smaller error bound; when $m$ is relatively large, $\eta$ becomes tiny, and the term $E_n(f)$ dominates the error bound, so a larger $n$ leads to a smaller error bound.

\begin{figure}[htbp]
  \centering
  \includegraphics[width=\textwidth]{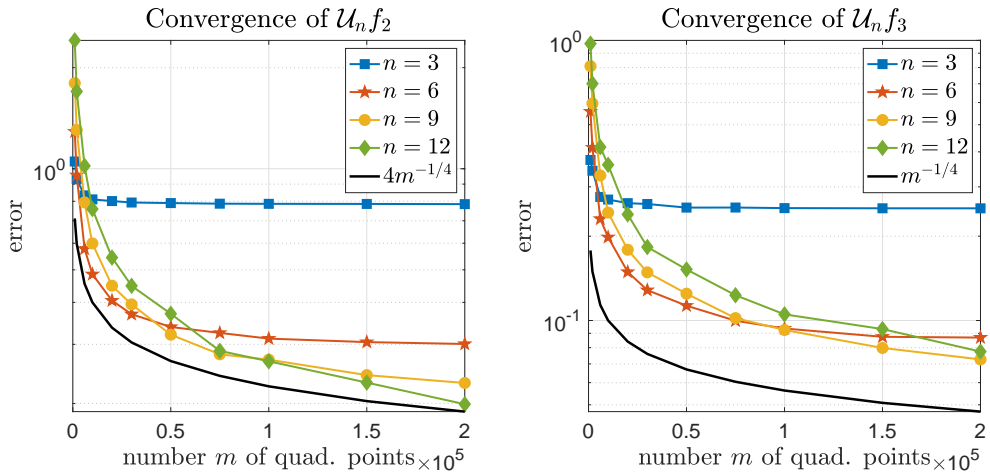}
  \caption{Convergence of the unfettered hyperinterpolation in the approximation of $f_2$ and $f_3$.}\label{fig:unfettered2}
\end{figure}

Thus, we may conclude a \emph{rule of thumb} for determining the degree $n$ of the unfettered hyperinterpolation in real-world applications: If the number of samples is limited, then choose a small $n$; on the other hand, if the samples are relatively sufficient, then choose a large $n$.

\subsection{QMC hyperinterpolation and QMC designs}

We then investigate the QMC hyperinterpolation, using equal area points, Coulomb energy points, Fekete points, and spherical $t$-designs. We first consider the approximation of $f_1\in\mathbb{P}_6$ by the QMC hyperinterpolation using equal area points, and we show that the refined error estimate \eqref{equ:errorqmc} in Theorem \ref{QMCthm} is indeed sharper than the estimate \eqref{equ:error} in Theorem \ref{thm}. A convergence result of quadrature rules using equal area points can be found in \cite[Section 6.1]{hesse2010numerical}. For any $n\geq 6$, we have 
\begin{equation}\label{equ:errorf1}
\|\mathcal{Q}_nf_1-f_1\|_{\Lt}\leq \frac{c''(s,d)}{m^{s/d}}\sqrt{\frac{Z(d+1,n)}{a_n^{(s)}}} \|f_1\|_{\Hs},
\end{equation}
in the light of Corollary \ref{cor:QMCpoly}. As the QMC strength $s^*$ of equal area points is conjectured in \cite{MR3246811} to be $2$, we may expect the decaying rate of $\|\mathcal{Q}_nf_1-f_1\|_{\Lt}$ with respect to $m$ to be $m^{-1}$ on the 2-sphere $\mathbb{S}^2$. However, from the general framework of the unfettered hyperinterpolation, we can only expect the decaying rate to be $m^{-1/2}$; see discussions in Section \ref{subsec:QMCinferior}. The $\Lt$ errors are depicted in Figure \ref{fig:QMC1}, which perfectly coincide with these deductions from our theory. We see that although the QMC hyperinterpolation can be regarded as a special case in the general framework of unfettered hyperinterpolation, the general estimate may not be sharp. Moreover, we find that a smaller $n$ leads to a smaller error, as suggested by the error bound \eqref{equ:errorf1}.

\begin{figure}[htbp]
  \centering
  \includegraphics[width=9cm]{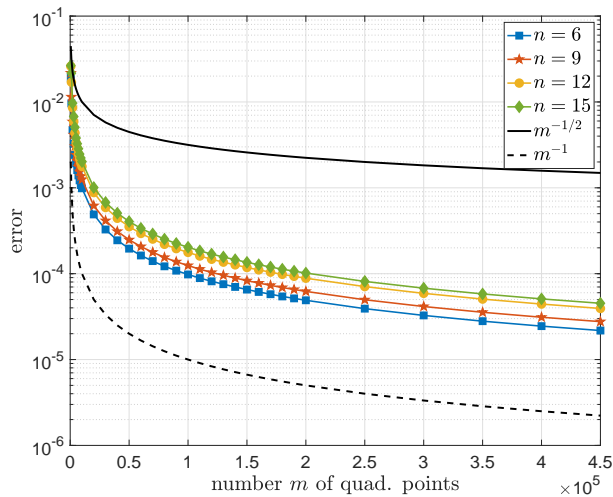}
  \caption{Convergence of the QMC hyperinterpolation in the approximation of $f_{1}$ using equal area points.}\label{fig:QMC1}
\end{figure}

We then consider the approximation of the normalized Wendland function $f_{4,2}$ by QMC hyperinterpolation, in which the term $n^{-s}\|f_{4,2}\|_{\Hs}$ cannot be ignored. Thus, the terms $n^{-s}$ and $m^{-s/2}$ jointly determine the convergence rate of $\|\mathcal{Q}_nf_{4,2}-f_{4,2}\|_{\Lt}$. It is conjectured in \cite{MR3246811} that the strength of Fekete points, equal area points, and Coulomb energy points is 1.5, 2, and 2, respectively. The $\Lt$ errors are depicted in Figure \ref{fig:QMC2}. Similarly to the unfettered hyperinterpolation using scattered data, we see that the term $\frac{1}{m^{s/d}}\sqrt{\frac{Z(d+1,n)}{a_n^{(s)}}}\|f\|_{\Hs}$ dominates the error bound when $m$ is relatively small, so a smaller $n$ leads to a smaller error; and the term $n^{-s}\|f\|_{\Hs}$ dominates the error bound when $m$ is relatively large. We observe that each error curve flattens as $m$ increases, and the curve of $n=6$ is higher than others when $m$ is large enough. Note that each curve corresponds to a fixed degree $n$. Thus the rule of thumb for determining the degree $n$ of the unfettered hyperinterpolation also applies to the QMC hyperinterpolation. The error curves of the QMC hyperinterpolation using spherical $t$-designs quickly flatten once the number $m$ of spherical $t$-designs renders the required quadrature exactness degrees. The convergence of the QMC hyperinterpolation using Fekete points is not monotonic. In light of Womersley's caveat on his website, the non-monotonic convergence is possibly caused by the fact that all computed Fekete points are only approximate local maximizers of the determinant for polynomial interpolation.

\begin{figure}[htbp]
  \centering
  \includegraphics[width=\textwidth]{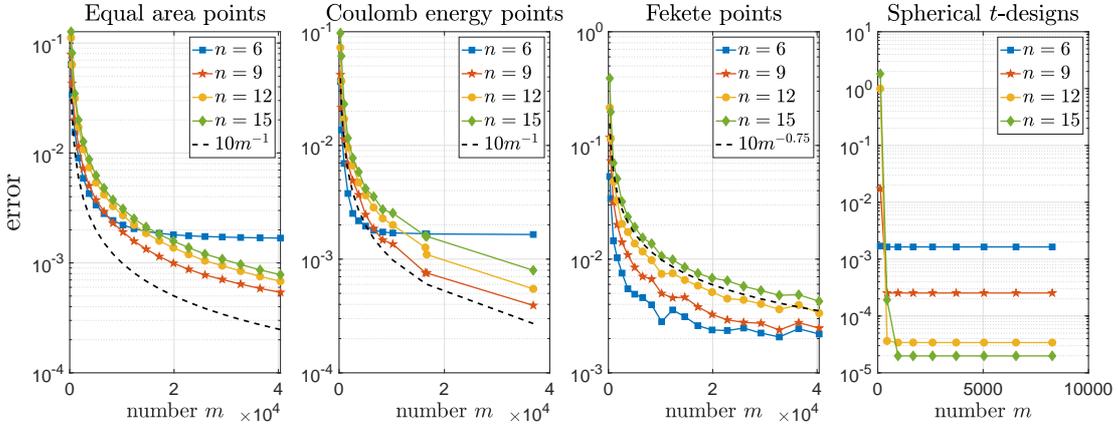}
  \caption{Convergence of the QMC hyperinterpolation in the approximation of $f_{4,2}$ using different kinds of point sets.}\label{fig:QMC2}
\end{figure}

We then study the performance of the QMC hyperinterpolation in the approximation of functions with different levels of smoothness. As we mentioned, the normalized Wendland function $f_{4,\sigma}$ belongs to $H^{\sigma+3/2}(\Sd)$. The $\Lt$ errors of the QMC hyperinterpolation of degree $n=5$ in the approximation of $f_{4,\sigma}$ with $\sigma = 0,1,\ldots,4$ are displayed in Figure \ref{fig:QMC3}, and the degree is intentionally set so small that error curves corresponding to different $\sigma$ can be distinguished. As we expect, the QMC hyperinterpolation is better in terms of $\Lt$ errors if the function to be approximated is smoother.

\begin{figure}[htbp]
  \centering
  \includegraphics[width=\textwidth]{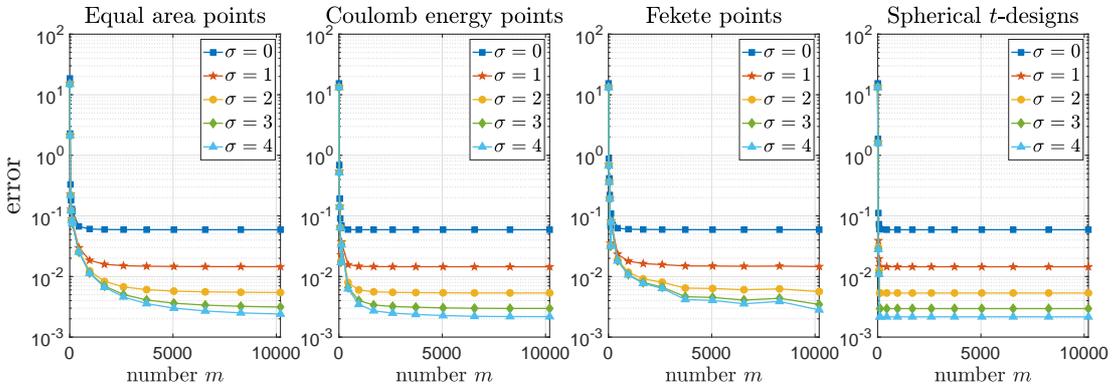}
  \caption{Convergence of the QMC hyperinterpolation in the approximation of $f_{4,\sigma}$ with $\sigma = 0,1,2,3,4$.}\label{fig:QMC3}
\end{figure}

Finally, we give a numerical example related to Remark \ref{rem:asymp} and Corollary \ref{cor:QMChyper} by considering the approximation of $f_{4,\sigma}$. As we mentioned in Section \ref{sec:designs}, to form a spherical $t$-design, $m$ should satisfy $m\asymp t^d$. Thus, to construct an original hyperinterpolant $\mathcal{L}_nf$ of degree $n$ on the 2-sphere $\mathbb{S}^2$ requires $m$ to be of order $n^2$, and we have $\|\mathcal{L}_nf-f\|_{\Lt}\rightarrow0$ as $n\rightarrow\infty$. According to Remark \ref{rem:asymp}, $m$ should have a lower bound of order $(n+1)^{d+\varepsilon_1}n^{\frac{d^2}{2s}+\varepsilon_2}$ for any $\varepsilon_1,\varepsilon_2>0$ to imply $\|\mathcal{Q}_nf-f\|_{\Lt}\rightarrow0$ as $n\rightarrow\infty$. The $\Lt$ errors with respect to the degree $n$ are depicted in Figure \ref{fig:QMC4}, and we let $m=(n+1)^2$ and $\lceil(n+1)^2n^{\frac{2}{\sigma+3/2}}\rceil$. The choice of $m=(n+1)^2$, which suffices to ensure the convergence of the original hyperinterpolation as $n\rightarrow\infty$, fails to imply the \emph{monotonic} convergence of the QMC hyperinterpolation. The choice of $m=\lceil(n+1)^2n^{\frac{2}{\sigma+3/2}}\rceil$, according to our theory, can ensure the convergence of $\mathcal{Q}_nf$ as $n\rightarrow \infty$, as shown in Figure \ref{fig:QMC4}. It may be strange to find that a larger $\sigma$ leads to a larger error level; this is due to the choice of $m=\lceil(n+1)^2n^{\frac{2}{\sigma+3/2}}\rceil$: a larger $\sigma$ implies a smaller $m$.

\begin{figure}[htbp]
  \centering
  \includegraphics[width=\textwidth]{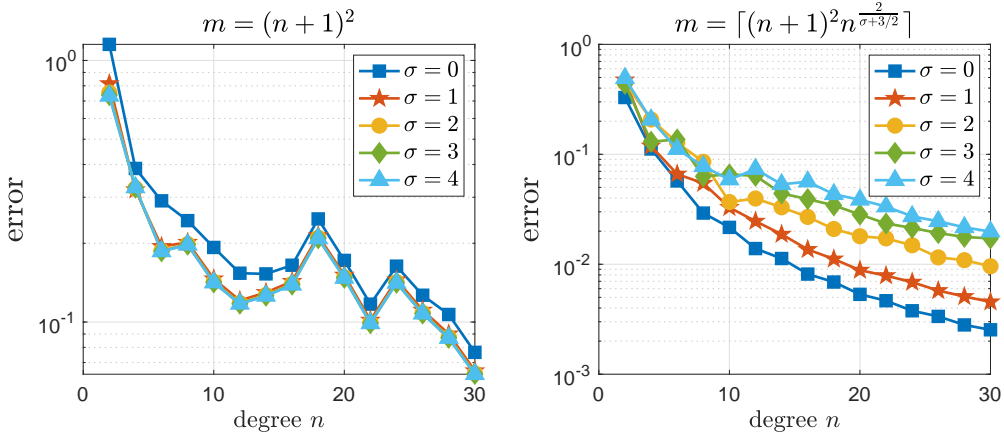}
  \caption{Performance of the QMC hyperinterpolation in the approximation of $f_{4,\sigma}$ with $m = (n+1)^2$ and $m=\lceil(n+1)^2n^{\frac{2}{\sigma+3/2}}\rceil$.}\label{fig:QMC4}
\end{figure}

By Corollary \ref{cor:QMChyper}, if we let $m\gtrsim (n+1)^2n^{\frac{2}{s}+2}$, then we can expect $\|\mathcal{Q}_nf-f\|_{\Lt}\lesssim n^{-s}$. This corollary is asserted by Figure \ref{fig:QMC5}, in which we investigate the approximation of $f_{4,2}$. We know that $f_{4,2}\in H^{2+3/2}(\Sd)$, thus we test on five choices of the number $m$, namely, $m = \beta\lceil(n+1)^2n^{2+\frac{2}{2+3/2}}\rceil$ with $\beta = 1,2,\ldots,5$. We see that the decaying rates of five choices all coincide with $m^{-(2+3/2)}$. This observation suggests $\|\mathcal{Q}_nf_{4,2}-f_{4,2}\|_{\Lt}\lesssim n^{-(2+3/2)}$, and more importantly, successfully verifies our theory on the QMC hyperinterpolation.

\begin{figure}[htbp]
  \centering
  \includegraphics[width=9cm]{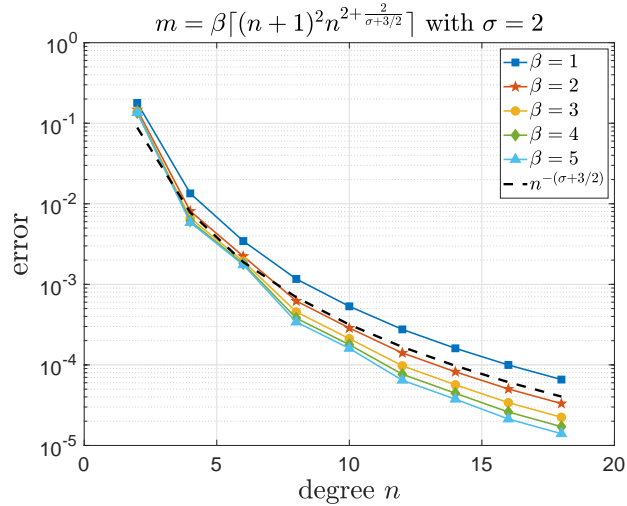}
  \caption{Convergence of the QMC hyperinterpolation in the approximation of $f_{4,2}$ with $m$ been a multiple $\beta$ of $\lceil(n+1)^2n^{2+\frac{2}{\sigma+3/2}}\rceil$ for $\beta = 1,2,\ldots,5$.}\label{fig:QMC5}
\end{figure}

\section{Concluding Remarks}

In this paper, we investigate the approximation scheme of hyperinterpolation on the sphere. The quadrature rules used in the construction of hyperinterpolation are not required to be exact for any polynomials but only to satisfy the Marcinkiewicz--Zygmund property, and we give the corresponding error estimate. Such an approximation scheme without the quadrature exactness assumption is referred to as the unfettered hyperinterpolation. If the quadrature rules use QMC designs, then the error estimate can be refined. To emphasize the particularity of QMC designs, we refer to the hyperinterpolation using QMC designs as quadrature points as the QMC hyperinterpolation. Note that the QMC hyperinterpolation can be regarded as a special case in the general framework of the unfettered hyperinterpolation. The general and refined estimates are split into two terms: a term representing the error estimate of the original hyperinterpolation of full quadrature exactness and another term introduced as compensation for the loss of exactness degrees. The newly introduced term may not converge to zero as the degree of hyperinterpolation tends to $\infty$, and we need to control it in practice. The numerical experiments show that the construction of hyperinterpolation using quadrature rules without exactness is feasible, and they verify the error estimates given in Sections \ref{sec:unfetteredtheory} and \ref{sec:QMCtheory}. The general framework of the unfettered hyperinterpolation on the sphere may be extended to the scheme of hyperinterpolation on other regions, such as a disk \cite{hansen2009norm}, a square \cite{caliari2007hyperinterpolation}, a cube \cite{caliari2008hyperinterpolation,wang2014norm}, a spherical triangle \cite{sommariva2021numerical}, and a spherical shell \cite{MR3554421,MR3739962}.

\section*{Acknowledgements}
We would like to thank Dr. Yoshihito Kazashi for his comment on our manuscript.




\bibliographystyle{siamplain}
\bibliography{myref}
\clearpage

\end{document}